\documentclass[oneside,reqno,a4paper,12pt]{amsart}
\usepackage{preamble}
\title{Codistinguished abelian subcategories}
\author{Charley Cummings and David Nkansah}

\begin{document}

\begin{abstract}
We introduce codistinguished abelian subcategories of triangulated categories, which generalise J{\o}rgensen's proper abelian subcategories and are dual to Linckelmann's distinguished abelian subcategories. We show that they come equipped with a non-trivial exact structure induced by the ambient triangulated category, and that every exact structure on an abelian category arises in this way. We also show that fully faithful adjunction triples produce new codistinguished abelian subcategories from old.

\end{abstract}

\maketitle

\blfootnote{2020 Mathematics Subject Classification: 18E10, 18G80.}
\blfootnote{Keywords and phrases: abelian subcategory, triangulated category, exact category, adjunction triple.}

{
\hypersetup{linkcolor=black}
\tableofcontents
}

\makeatletter
\newcommand{\specialsetup}[1]{%
  \par\addvspace{\medskipamount}%
  \noindent\textbf{#1.}\enspace
  \addcontentsline{toc}{section}{#1}%
  \ignorespaces
}
\makeatother

\section*{Introduction}

Abelian categories and triangulated categories provide natural settings in which to study the representation-theoretic and homological properties of rings. The interaction between these two kinds of categories is often particularly fruitful. This is unsurprising, since many well-known triangulated categories are constructed directly from abelian categories, including homotopy categories, derived categories \cite{verdier}, and singularity categories \cite{buchweitz_singularities}.

Conversely, triangulated categories often contain many interesting abelian subcategories. One celebrated source of such subcategories is provided by hearts of \(t\)-structures, introduced by Beilinson, Bernstein, Deligne, and Gabber in \cite{BBDG}. A heart realises an abelian category as a full subcategory of a triangulated category in such a way that its short exact sequences correspond precisely to distinguished triangles whose three terms lie in the heart. This connection has been exploited in algebraic geometry through Bridgeland stability conditions \cites{bridgeland-stability-2007,bridgeland-stability-2008}, in representation theory through Happel--Reiten--Smal{\o} tilting theory \cites{happel-tilting-1996,woolf-stability-2010}, and in the representation theory of finite groups through the work of Deligne and Lusztig \cite{deligne-representations-1976}.

However, many important triangulated categories arising in representation theory do not admit \(t\)-structures with non-trivial hearts. For example, this occurs for negative cluster categories and stable categories of finite dimensional symmetric algebras \cite{holm-sparseness-2013}*{Thm.~A}. Nevertheless, such triangulated categories may still contain interesting abelian subcategories whose short exact sequences are closely related to the distinguished triangles of the ambient category. Two notions which capture this phenomenon are proper abelian subcategories, introduced by J{\o}rgensen \cite{jorgensen-abelian-2022}, and distinguished abelian subcategories, introduced by Linckelmann \cite{linckelmann-abelian-2024}.

Let \(\clT\) be a triangulated category with suspension functor \(\Sigma\), and let \(\clA\) be a full abelian subcategory of \(\clT\). Following J{\o}rgensen \cite{jorgensen-abelian-2022}, the subcategory \(\clA\) is a \emph{proper abelian subcategory} of \(\clT\) if the following holds:
\[
\exists \text{ an exact sequence }0 \xrightarrow{} a \xrightarrow{f} b \xrightarrow{g} c \xrightarrow{} 0
\text{ in } \clA
\iff
\exists\text{ a triangle }
a \xrightarrow{f} b \xrightarrow{g} c \xrightarrow{} \Sigma a
\text{ in } \clT.
\]
Thus, the short exact sequences in \(\clA\) are precisely those sequences induced by triangles in \(\clT\) whose first three terms lie in \(\clA\).

Proper abelian subcategories directly generalise hearts of \(t\)-structures, but they also occur in triangulated categories which have no non-trivial hearts. J{\o}rgensen showed that they arise naturally in negative cluster categories by taking the extension closure of suitable collections of Hom-orthogonal bricks with sufficiently few negative extensions. Since their introduction, proper abelian subcategories have been developed further in several directions, including Happel--Reiten--Smal{\o} tilting theory \cite{jorgensen-proper-2021}, intermediate categories \cite{kortegaard-intermediate-2025}, and Auslander--Reiten theory \cite{nkansah-nakayama-2026}.

Linckelmann introduced a weaker notion in \cite{linckelmann-abelian-2024}. The subcategory \(\clA\) is a \emph{distinguished abelian subcategory} of \(\clT\) if the following holds:
\[
\exists \text{ an exact sequence }0 \xrightarrow{} a \xrightarrow{f} b \xrightarrow{g} c \xrightarrow{} 0
\text{ in } \clA
\Longrightarrow
\exists\text{ a triangle }
a \xrightarrow{f} b \xrightarrow{g} c \xrightarrow{} \Sigma a
\text{ in } \clT.
\]
Thus, distinguished abelian subcategories generalise proper abelian subcategories by retaining only one of the two implications in their definition. Informally, there are enough triangles in \(\clT\) to account for all short exact sequences in \(\clA\).

Distinguished abelian subcategories arise prominently in stable module categories of finite dimensional selfinjective algebras. In this setting, module categories of suitable quotient algebras can be embedded into a stable module category. A key idea is to construct appropriate functors
\[
\mod{D} \xrightarrow{} \mod{\Lambda}
\]
using the Eilenberg--Watts theorem, and then study their behaviour after postcomposing with the canonical quotient
\[
\mod{\Lambda} \xrightarrow{} \psmod{\Lambda}.
\]

However, neither proper nor distinguished abelian subcategories capture every naturally occurring abelian subcategory of a triangulated category. A basic example is provided by homotopy categories. Consider a module category embedded into a bounded homotopy category by regarding modules as stalk complexes. Only split short exact sequences of modules induce triangles whose first three terms are stalk complexes. Consequently, this embedding is generally neither proper nor distinguished. Nevertheless, every triangle whose first three terms are stalk complexes does induce a short exact sequence of modules. This suggests considering the implication dual to the one appearing in the definition of a distinguished abelian subcategory.

We therefore introduce the following notion. The subcategory \(\clA\) is a \emph{codistinguished abelian subcategory} of \(\clT\) if the following holds:
\[
\exists \text{ an exact sequence }0 \xrightarrow{} a \xrightarrow{f} b \xrightarrow{g} c \xrightarrow{} 0
\text{ in } \clA
\Longleftarrow
\exists\text{ a triangle }
a \xrightarrow{f} b \xrightarrow{g} c \xrightarrow{} \Sigma a
\text{ in } \clT.
\]
Informally, there are enough short exact sequences in \(\clA\) to account for all triangles in \(\clT\) whose first three terms lie in \(\clA\). Codistinguished abelian subcategories are dual to distinguished abelian subcategories. Moreover, an abelian subcategory is proper precisely when it is both distinguished and codistinguished.

This paper is motivated by two basic questions: \textit{why} are codistinguished abelian subcategories interesting, and \textit{where} do they occur? We give some initial answers before describing our main results.

\textbf{Why?}
The first reason is that codistinguished abelian subcategories complete a natural pattern. Proper and distinguished abelian subcategories have both been used successfully to study abelian subcategories of triangulated categories beyond the setting of \(t\)-structures. Since distinguished abelian subcategories retain one of the two implications appearing in the definition of a proper abelian subcategory, it is natural to investigate the dual implication.

Beyond completing this picture, codistinguished abelian subcategories possess useful structural properties. Under natural vanishing assumptions, a codistinguished abelian subcategory \(\clA\) admits desirable approximations; see Proposition~\ref{prop: kernels cover and cokernels envelope}. These approximations allow us to construct adjoints to the inclusion of \(\clA\) into suitable categories of two-term objects in \(\clT\); see Corollary~\ref{C: 2-term adj}. More significantly, extension closed codistinguished abelian subcategories are naturally related to Quillen exact categories.

\textbf{Where?}
Codistinguished abelian subcategories include proper abelian subcategories and hence hearts of \(t\)-structures. They therefore occur wherever proper abelian subcategories occur, for instance in negative cluster categories. They also arise beyond this setting. In particular, they appear naturally in homotopy categories; see Example~\ref{E: homotopy categories}. These examples are typically not proper unless the underlying ring is semisimple.

Codistinguished abelian subcategories can also be constructed from suitable fully faithful embeddings of abelian categories. Suppose that \(\clA\) is a codistinguished abelian subcategory of a triangulated category \(\clT\), and that
\[
\clC \xrightarrow{\iota} \clA
\]
is a fully faithful functor from an abelian category \(\clC\). If \(\iota\) reflects short exact sequences, then its essential image is again a codistinguished abelian subcategory of \(\clT\). We use this observation to study non-exact embeddings whose failure of exactness is controlled. Adjunctions and idempotent subalgebras provide a large supply of such examples.

\specialsetup{Main Results}
We begin by addressing \textit{why}. Our first main result shows that every extension closed codistinguished abelian subcategory carries a natural exact structure whose conflations are precisely the short exact sequences arising from triangles in the ambient triangulated category. This gives an analogue of Dyer's theorem \cite{dyer-exact-2005}; see also \cite{jorgensen-abelian-2022}*{Thm.~3.5(i)}. Dyer's theorem requires the additive subcategory to have vanishing negative first extensions. Our result does not require this vanishing condition; instead, it assumes that the additive subcategory is abelian and codistinguished. Conversely, every exact structure on an abelian category can be realised in this way.

Given an abelian subcategory \(\clA\) of a triangulated category \(\clT\), let \(\clE^\clT_d\) denote the class of short exact sequences
\[
0 \xrightarrow{} a \xrightarrow{} b \xrightarrow{} c \xrightarrow{} 0
\]
in \(\clA\) which induce distinguished triangles
\[
a \xrightarrow{} b \xrightarrow{} c \xrightarrow{} \Sigma a
\]
in \(\clT\).

\begin{ThmIntro}[{Theorem~\ref{thm: exact categories from codistinguished} and Theorem~\ref{T: codistinguished from exact categories}}]
\label{Main Thm: Exact}
The following statements hold:
\begin{enumerate}
    \item Let \(\clA\) be an abelian subcategory of a triangulated category \(\clT\). If \(\clA\) is an extension closed codistinguished abelian subcategory of \(\clT\), then the pair \((\clA,\clE^\clT_d)\) is a Quillen exact category.

    \item Let \(\clA\) be an abelian category, and let \(\clE\) be a class of short exact sequences in \(\clA\) such that \((\clA,\clE)\) is a Quillen exact category. Then there is a triangulated category \(\clD\) which realises \(\clA\) as an extension closed codistinguished abelian subcategory with \(\clE^\clD_d=\clE\).
\end{enumerate}
\end{ThmIntro}

The triangulated category \(\clD\) in Theorem~\ref{Main Thm: Exact}, Part~2, depends on the exact category structure \((\clA,\clE)\). More precisely, we take \(\clD\) to be the bounded derived category of the exact category \((\clA,\clE)\). When \(\clE\) is the class of all short exact sequences in \(\clA\), this is the usual bounded derived category \(\D^b(\clA)\). When \(\clE\) consists only of the split short exact sequences, it is the bounded homotopy category \(\K^b(\clA)\).

Therefore, codistinguished abelian subcategories interpolate between two familiar extremes. At one extreme, the maximal exact structure on \(\clA\) gives \(\D^b(\clA)\), where every short exact sequence induces a triangle. At the other, the split exact structure gives \(\K^b(\clA)\), where only split short exact sequences induce triangles whose terms lie in \(\clA\). More general exact structures describe intermediate behaviour between the derived and homotopy categories. From this perspective, a codistinguished embedding records which short exact sequences of \(\clA\) are visible as triangles in the ambient triangulated category.

Our next result concerns the exact structure from Theorem~\ref{Main Thm: Exact}, Part~1, in the setting of \(2\)-Calabi--Yau triangulated categories. It shows that, provided suitable approximations exist, this exact structure is Frobenius.

\begin{ThmIntro}[{Theorem~\ref{T: Frobenius exact}}]
Let \(k\) be a field. Let \(\clC\) be a Hom-finite idempotent complete \(2\)-Calabi--Yau \(k\)-linear triangulated category. Let \(\clA\) be an extension closed codistinguished abelian subcategory of \(\clC\) such that every object of \(\clA\) admits a \(\Sigma\clA\)-preenvelope. Then the exact category \((\clA,\clE_d^{\clC})\) is a Frobenius exact category.
\end{ThmIntro}

We now address \textit{where}. As explained above, one way to produce codistinguished abelian subcategories is through non-exact fully faithful functors
\[
\clC \xrightarrow{\iota} \clA
\]
between abelian categories which reflect short exact sequences. Adjunctions provide a natural source of such functors. If \(\iota\) admits an exact right adjoint, then \(\iota\) automatically reflects short exact sequences. Since \(\iota\) is then a left adjoint, its possible failure to be exact occurs on the left: a short exact sequence
\[
0 \xrightarrow{} x \xrightarrow{} y \xrightarrow{} z \xrightarrow{} 0
\]
may fail to remain exact at \(\iota x\). Dually, if \(\iota\) admits an exact left adjoint, then it again reflects short exact sequences, but its possible failure to be exact occurs on the right, at \(\iota z\).

Our final main result begins with an adjunction triple \((L,E,R)\) in which \(L\), or equivalently \(R\), is fully faithful. The functors \(L\) and \(R\) provide fully faithful embeddings which reflect short exact sequences, but their possible failures of exactness occur at opposite ends of a short exact sequence. Kuhn studied an \emph{intermediate functor} between \(L\) and \(R\) in \cite{kuhn-generic-1994}*{Sec.~4}. This functor can be viewed as an averaging of \(L\) and \(R\): it is again fully faithful \cite{crawley-boevey-quiver-2017}*{Lem.~2.2} and reflects short exact sequences, but any failure of exactness occurs at the image of the middle term. Our theorem shows that \(L\), \(R\), and the intermediate functor construct new extension closed codistinguished abelian subcategories from old ones.

\begin{ThmIntro}[{Theorem~\ref{T: three codistinguished embeddings} and Corollary~\ref{C: C exact iff C(C) is Serre sub}}]
Let \(\clA\) be an abelian subcategory of a triangulated category \(\clT\), and let \(\clC\) be an additive category. Suppose that we have a fully faithful adjunction triple \((L,E,R)\)
\[
\begin{tikzcd}[row sep=large, column sep=large]
\clA \arrow[r, "E" description]
&
\clC.
\arrow[l, "L"', bend right]
\arrow[l, "R", bend left]
\end{tikzcd}
\]
Then the following statements hold:
\begin{enumerate}
    \item There exists a fully faithful functor
    \(
    \clC \xrightarrow{C} \clA
    \)
    which preserves monomorphisms and epimorphisms and reflects short exact sequences. It is exact if and only if its essential image is a Serre subcategory of \(\clA\).

    \item The essential images \(L\clC\), \(R\clC\), and \(C\clC\) are extension closed in \(\clA\).

    \item If \(\clA\) is a codistinguished abelian subcategory of \(\clT\), then the essential images \(L\clC\), \(R\clC\), and \(C\clC\) are codistinguished abelian subcategories of \(\clT\).
\end{enumerate}
\end{ThmIntro}

Although \(\clC\) is assumed only to be additive, it is automatically abelian in this setting; see Remark~\ref{R: localisation of ab abelian cat is abelian}.

Idempotent subalgebras provide a large source of examples for this construction. Indeed, a fully faithful adjunction triple as in Theorem~\ref{T: three codistinguished embeddings} occurs naturally on the right-hand side of a recollement, and idempotents give a standard class of such recollements. We illustrate this construction in Section~\ref{S: Idempotent subalgebras}. We then study in greater detail the case where a finite dimensional algebra of finite representation type is realised as an idempotent subalgebra of its Auslander algebra.

\specialsetup{Setup}
    Throughout we fix the following setup:
    \begin{itemize}
    \item A \textit{subcategory} of a category will always mean a full subcategory that is closed under isomorphisms.

    \item An \textit{additive subcategory} of an additive category will always mean a full subcategory such that the inclusion functor is additive. In particular, the subcategory is closed under direct sums.
    
    \item We fix a triangulated category \(\clT\) with suspension functor \(\Sigma\).

    \end{itemize}

\section{Codistinguished abelian subcategories}

    Codistinguished abelian subcategories are those abelian subcategories of triangulated categories which reflect exact sequences.
    
    \begin{defn}\label{D: codis}
        An additive subcategory \(\clA\) of \(\clT\) is a \emph{codistinguished abelian subcategory of \(\clT\)} if it is abelian and if, for every triangle
        \[
        a \xrightarrow[]{f} b \xrightarrow[]{g} c \xrightarrow[]{} \Sigma a
        \]
        in \(\clT\) with \(a,b\) and \(c\) in \(\clA\), the sequence
        \[
        0 \xrightarrow[]{} a \xrightarrow[]{f} b \xrightarrow[]{g} c \xrightarrow[]{} 0
        \]
        is a short exact sequence in \(\clA\).
    \end{defn}

    \begin{rem}
    We will sometimes call an abelian subcategory of a triangulated category \emph{codistinguished} if it is a codistinguished abelian subcategory of its ambient triangulated category. Likewise, we will call a fully faithful functor \(\clA \xrightarrow[]{} \clT\) a \emph{codistinguished embedding}, or simply \emph{codistinguished}, if its essential image is a codistinguished abelian subcategory of \(\clT\).
    \end{rem}

    Proper abelian subcategories arise naturally in negative cluster categories, while distinguished abelian subcategories appear naturally in stable module categories of finite dimensional selfinjective algebras. The following example may suggest that codistinguished abelian subcategories arise naturally in homotopy categories.

    \begin{ex}[Homotopy categories]\label{E: homotopy categories}
    Let \(A\) be a ring, and regard \(\Mod{A}\) as a full subcategory of the homotopy category \(\K(\Mod{A})\) consisting of complexes concentrated in degree zero. Then
    \[
    \Mod{A}\subseteq \K(\Mod{A})
    \]
    is a codistinguished abelian subcategory. Moreover, it is proper if and only if \(A\) is semisimple.

    Indeed, suppose
    \[
    M \xrightarrow[]{} N \xrightarrow[]{} L \xrightarrow[]{} M[1]
    \]
    is a triangle in \(\K(\Mod{A})\) with \(M,N\) and \(L\) in \(\Mod{A}\). Since every chain map \(L \xrightarrow[]{} M[1]\) is zero, the connecting morphism must vanish. Hence the triangle splits, and therefore the induced sequence
    \[
    0 \xrightarrow[]{} M \xrightarrow[]{} N \xrightarrow[]{} L \xrightarrow[]{} 0
    \]
    is split exact in \(\Mod{A}\). Thus the embedding is codistinguished.

    On the other hand, a short exact sequence in \(\Mod{A}\) induces such a triangle in \(\K(\Mod{A})\) if and only if it is split, since the corresponding triangle must split. Therefore the embedding is proper if and only if every short exact sequence in \(\Mod{A}\) is split, that is, if and only if \(A\) is semisimple.
    \end{ex}

    \begin{rem}\label{R: bounded homotopy cod}
        The same argument as in Example~\ref{E: homotopy categories} shows that the restricted embedding
        \[
        \mod{A} \subseteq \Kb(\mod{A})
        \]
        is also codistinguished.
    \end{rem}

    The following characterisation is a reinterpretation of the definition of a codistinguished abelian subcategory.
    
    \begin{lem}\label{L:codist_classification}
    Let \(\clA\) be an abelian subcategory of a triangulated category \(\clT\).
    The following are equivalent:
    \begin{enumerate}
        \item[\rm(1)] \(\clA\) is a codistinguished abelian subcategory of \(\clT\).
        \item[\rm(2)] For every triangle
        \[
            a \xrightarrow{f} b \xrightarrow{g} c \xrightarrow{h} \Sigma a
        \]
        in \(\clT\) with \(a\), \(b\) and \(c\) in \(\clA\), the induced natural transformations
        \[
            \restr{\clT(\Sigma a,-)}{\clA}
            \xrightarrow{h^*}
            \restr{\clT(c,-)}{\clA}
            \qquad \text{and} \qquad
            \restr{\clT(-,\Sigma^{-1}c)}{\clA}
            \xrightarrow{(-\Sigma^{-1}h)_*}
            \restr{\clT(-,a)}{\clA}
        \]
        are both zero.
    \end{enumerate}
    \begin{proof}
        \((1\implies 2)\): Let
        \[
            a \xrightarrow{f} b \xrightarrow{g} c \xrightarrow{h} \Sigma a
        \]
        be a triangle in \(\clT\) with \(a\), \(b\) and \(c\) in \(\clA\). Since \(\clA\) is a codistinguished abelian subcategory of \(\clT\), the sequence
        \[
            0 \xrightarrow{} a \xrightarrow{f} b \xrightarrow{g} c \xrightarrow{} 0
        \]
        is short exact in \(\clA\). In particular, \(f\) is a monomorphism in \(\clA\), and \(g\) is an epimorphism in \(\clA\).

        Let \(a'\) be an object in \(\clA\) and let
        \[
            \Sigma a \xrightarrow{v} a'
        \]
        be a morphism in \(\clT\). Since \(h g=0\), as this is a composition of consecutive morphisms in a triangle, we have \((v h) g=0\). The composition \(v h\) lies in \(\clA\) as it is a full subcategory. Therefore, as \(g\) is an epimorphism in \(\clA\), it follows that
        \[
            v h=0.
        \]
        Hence \(h^*\) is zero on \(\clA\).

        Now let \(a'\) be an object in \(\clA\), and let
        \[
            a' \xrightarrow{u} \Sigma^{-1}c
        \]
        be a morphism in \(\clT\). Since \(f (-\Sigma^{-1}h)=0\), as this is a composition of consecutive morphisms in a rotated triangle, we have \(f (-\Sigma^{-1}h) u=0\). The composition  \((-\Sigma^{-1}h) u\) lies in \(\clA\) as it is a full subcategory. Therefore, as \(f\) is a monomorphism in \(\clA\), it follows that
        \[
            (-\Sigma^{-1}h) u=0.
        \]
        Hence \((-\Sigma^{-1}h)_*\) is zero on \(\clA\).

        \((2\implies 1)\): Let
        \[
            a \xrightarrow{f} b \xrightarrow{g} c \xrightarrow{h} \Sigma a
        \]
        be a triangle in \(\clT\) with \(a\), \(b\) and \(c\) in \(\clA\). For every object \(a'\) in \(\clA\), applying \(\clT(-,a')\) and \(\clT(a',-)\) gives exact sequences
        \[
            \clT(\Sigma a,a') \xrightarrow{0} \clT(c,a') \xrightarrow{g^*} \clT(b,a') \xrightarrow{f^*} \clT(a,a')
        \]
        and
        \[
            \clT(a',\Sigma^{-1}c) \xrightarrow{0} \clT(a',a) \xrightarrow{f_*} \clT(a',b) \xrightarrow{g_*} \clT(a',c),
        \]
        where the first morphism in each row is zero by assumption. The first exact sequence shows that \(g\) is the cokernel of \(f\) in \(\clA\) and the second exact sequence shows that \(f\) is the kernel of \(g\) in \(\clA\). Hence
        \[
            0 \xrightarrow{} a \xrightarrow{f} b \xrightarrow{g} c \xrightarrow{} 0
        \]
        is a short exact sequence in \(\clA\). So, \(\clA\) is a codistinguished abelian subcategory of \(\clT\).
    \end{proof}
    \end{lem}

    It immediately follows from Lemma~\ref{L:codist_classification} that any abelian subcategory of a triangulated category without negative first extensions is codistinguished.

    \begin{cor}\label{C: E_1 construction}
        Let \(\clA\) be an abelian subcategory of \(\clT\). If \(\clT(\clA,\Sigma^{-1}\clA)=0\), then \(\clA\) is a codistinguished abelian subcategory of \(\clT\).
    \begin{proof}
    Since
        \(
            \clT(\clA,\Sigma^{-1}\clA)=0,
        \)
        both natural transformations in Lemma~\ref{L:codist_classification} are zero. 
    \end{proof}
    \end{cor}

    \begin{rem}\label{R: abelian subcat of heart is E_1}
        In particular, Corollary~\ref{C: E_1 construction} implies that any abelian subcategory
        of the heart of a \(t\)-structure is codistinguished.
    \end{rem}
    
    \begin{rem}\label{R: converse to E_1}
        The converse of Corollary~\ref{C: E_1 construction} fails in general, as shown in
        Example~\ref{E: negative extensions}.
    \end{rem}

    The following result provides an obstruction to an abelian subcategory of a triangulated category being codistinguished. It shows that, as for hearts of \(t\)-structures, codistinguished abelian subcategories have trivial intersection with their shifts.

    \begin{prop}
    Let \(\clA\) be a codistinguished abelian subcategory of a triangulated category
    \(\clT\). Then \(\Sigma\clA \cap \clA = 0\).
    \begin{proof}
        This proof is the same as the proof of \cite{linckelmann-abelian-2024}*{Cor.\ 6.3}.
        That result is phrased for proper abelian subcategories, but the same reasoning applies
        here without change.

        Let \(a\) be an object in $\clA$ such that $\Sigma a$ is also an object of $\clA$. Then there exists a triangle
        \[
            a \xrightarrow{} 0 \xrightarrow{} \Sigma a \xrightarrow{\id_{\Sigma a}} \Sigma a
        \]
        whose first three terms lie in \(\clA\). Since \(\clA\) is codistinguished, the sequence
        \[
            0 \xrightarrow{} a \xrightarrow{} 0 \xrightarrow{} \Sigma a \xrightarrow{} 0
        \]
        is short exact in \(\clA\). Consequently, it follows that \(a=0\), and so, \(\Sigma\clA \cap \clA=0\).
    \end{proof}
    \end{prop}

    We now turn to codistinguished embeddings arising from Corollary~\ref{C: E_1 construction}. These admit particularly well-behaved approximations.

    Let \(\clY\) be an additive subcategory of an additive category \(\clX\), and let \(X\) be an object of \(\clX\). A strong \emph{\(\clY\)-cover} of \(X\) is a morphism \(Y\xrightarrow{c}X\), with \(Y\) in \(\clY\), such that for each object \(Y'\) in \(\clY\), the induced group homomorphism \(\clX(Y',Y) \xrightarrow{\clX(Y',c)} \clX(Y',X)\) is an isomorphism. The notion of a strong \(\clY\)-envelope is dual.

    \begin{prop}\label{prop: kernels cover and cokernels envelope}
    Let \(\clA\) be an abelian subcategory of \(\clT\) satisfying \(\clT(\clA,\Sigma^{-1}\clA)=0\). Consider the following diagram in \(\clA\):
    \[
    a\xrightarrow[]{f} b \xrightarrow[]{g} c.
    \]
    The following statements are equivalent:
    \begin{enumerate}
        \item The sequence 
        \[
        0 \xrightarrow{} a \xrightarrow[]{f} b \xrightarrow[]{g} c \xrightarrow{} 0
        \]
        is short exact in \(\clA\).
        \item There is a commutative diagram in \(\clT\) of the form
        \begin{equation}\label{diagram: covers and envelopes ses}
            \begin{tikzcd}
                & a \arrow[r, "f"] \arrow[d, "\varepsilon"] & b \arrow[r, "g'"] \arrow[d, equal] & z \arrow[d, "\eta"] \arrow[r] & \Sigma a
                \\
                \Sigma^{-1}c \arrow[r] & x \arrow[r, "f'"] & b \arrow[r, "g"] & c &         
            \end{tikzcd}
        \end{equation}
        with the following properties:
        \begin{itemize}
            \item The top and bottom rows are triangles.
            \item The morphism \(a\xrightarrow[]{\varepsilon}x\) is a strong \(\clA\)-cover.
            \item The morphism \(z \xrightarrow[]{\eta} c\) is a strong \(\clA\)-envelope.
        \end{itemize}
    \end{enumerate}
    \begin{proof}
    We begin with the implication \((1) \implies (2)\).
    
    {\bf Constructing the commutative diagram \eqref{diagram: covers and envelopes ses}.} By the axioms of triangulated categories, we can embed \(f\) and \(g\) into triangles as in \eqref{diagram: covers and envelopes ses}. The morphism \(f'\) is a weak kernel of \(g\), and \(g'\) is a weak cokernel of \(f\). Since \(g f=0\), there exist morphisms \(\varepsilon\) and \(\eta\) making the diagram commute.
        
    {\bf \(\varepsilon\) is a strong \(\clA\)-cover.} Let \(a' \xrightarrow[]{v'} x\) be a morphism with \(a'\) in \(\clA\). The composition \(g(f'v')\) vanishes, since \(g f'\) is a composition of consecutive morphisms in a triangle. As \(f\) is the kernel of \(g\), there exists a unique morphism \(a' \xrightarrow[]{v} a\) rendering the following diagram commutative
    \[
    \begin{tikzcd}
                 & a' \arrow[d, "v'"] \arrow[ldd, "v"', dashed] &   \\
                 & x \arrow[d, "f'"] \arrow[rd, "0"]            &   \\
    a \arrow[r, "f"] & b \arrow[r, "g"]                              & c.
    \end{tikzcd}
    \]
    Now consider the morphism \( a' \xrightarrow[]{\varepsilon v - v'} x\) in \(\clT\). We have that
    \[
    f'(\varepsilon v - v') = f'\varepsilon v - f'v' = f'\varepsilon v - f v = (f'\varepsilon - f)v = 0,
    \]
    where the last equality holds as the left square in \eqref{diagram: covers and envelopes ses} commutes. Since the bottom row in \eqref{diagram: covers and envelopes ses} is a triangle, \(\varepsilon v - v'\) factors through a morphism in \(\clT(a',\Sigma^{-1}c)\), which vanishes by assumption. Therefore, \(\varepsilon v = v'\) and we obtain in \(\clT\) a commutative diagram 
    \[
    \begin{tikzcd}
             & a' \arrow[d, "v'"] \arrow[ld, "v"'] \\
    a \arrow[r, "\varepsilon"] & x.                                
    \end{tikzcd}
    \]
    Thus, \(\varepsilon\) is an \(\clA\)-precover of \(x\).
    
    We show that \(v\) is the unique morphism making the above triangle commute. Suppose \( a' \xrightarrow{\widetilde{v}} a\) satisfies \(\varepsilon \widetilde{v}=0\). Postcomposing with \(f'\) gives
    \[
    0 = f'\varepsilon \widetilde{v} = f \widetilde{v}.
    \]
    Since \(f\) is a monomorphism, it follows that \(\widetilde{v}=0\). Thus, \(\varepsilon\) is a strong \(\clA\)-cover.
    
    {\bf \(\eta\) is a strong \(\clA\)-envelope.} This follows by a dual argument.
    
    We now prove \((2) \implies (1)\).
    
    {\bf \(g\) is the cokernel of \(f\).} Let \(b \xrightarrow[]{w} a'\) be a morphism in \(\clA\) with \(w f=0\). Consider the diagram
    \[
    \begin{tikzcd}
                   & a \arrow[r, "f"] \arrow[d, "\varepsilon"] & b \arrow[r, "g'"] \arrow[d, equal] & z \arrow[d, "\eta"] \arrow[r] \arrow[ldd, "w'", bend left=67] & \Sigma a \\
    \Sigma^{-1}c \arrow[r] & x \arrow[r, "f'"]              & b \arrow[r, "g"] \arrow[d, "w"]                  & c \arrow[ld, "w''"]                                         &          \\
                   &                                 & a'                                                &                                                             &         
    \end{tikzcd}
    \]
    Since \(g'\) is a weak cokernel of \(f\), there exists a morphism \(z \xrightarrow[]{w'} a'\) such that \(w' g'=w\). As \(\eta\) is a strong \(\clA\)-envelope, there exists a unique morphism \(c \xrightarrow[]{w''} a'\) such that \(w''\eta=w'\). By commutativity, \(w'' g = w\).
    
    To prove uniqueness, suppose \(c \xrightarrow{\widetilde{w}''} a'\) also satisfies \(\widetilde{w}''  g=w\). Then \((\widetilde{w}''\eta-w') g'=0\), and so \(\widetilde{w}''\eta-w'\) factors through a morphism in \(\clT(\Sigma a,a')\cong\clT(a,\Sigma^{-1}a')=0\), which vanishes by assumption. Thus, \(\widetilde{w}''\eta=w'\). Since $\eta$ is a strong $\clA$-envelope and \(w''\eta=w'\), it follows that $\widetilde{w}'' = w''$.
    
    {\bf \(f\) is the kernel of \(g\).} This follows by a dual argument.
    \end{proof}
    \end{prop}

    \begin{rem}\label{R: homotopy (co)kernels to (co)kernel functors}
    For additive subcategories \(\clX\) and \(\clZ\) of \(\clT\), we define \(\clX * \clZ\) to be the full subcategory of \(\clT\) consisting of those objects \(t\) in \(\clT\) for which there exists a triangle
    \[
    x \xrightarrow{} t \xrightarrow{} z \xrightarrow{} \Sigma x
    \]
    with \(x\) in \(\clX\) and \(z\) in \(\clZ\).

    Let \(\clA\) be an abelian subcategory of a triangulated category \(\clT\). Firstly, we fix a global choice of (co)kernels for morphisms in \(\clA\), denoted by
    \begin{alignat*}{3}
    f \mapsto \coker(f)
        &\qquad & \text{and} & \qquad
        f \mapsto \ker(f).
    \intertext{Secondly, for each object \(z\) in \(\clA * \Sigma\clA\) and each object \(x\) in \(\Sigma^{-1}\clA * \clA\), fix triangles}
    a_z \xrightarrow[]{f_z} b_z \xrightarrow{} z \xrightarrow{} \Sigma(a_z)
        &\qquad & \text{and} & \qquad
        \Sigma^{-1}(d_x) \xrightarrow[]{} x \xrightarrow{} c_x \xrightarrow[]{g_x} d_x
    \intertext{with \(a_z, b_z, c_x,\) and \(d_x\) in \(\clA\). We define the assignments}
    z \xmapsto{\C_{\clT}} \coker(f_z)
        &\qquad & \text{and} & \qquad
        x \xmapsto{\K_{\clT}} \ker(g_x).
    \end{alignat*}

    Note, by the axioms of triangulated categories, there exist non-unique morphisms \(\eta_z\) and \(\varepsilon_x\) rendering the following diagrams commutative:
    \[
    \begin{tikzcd}[column sep=1.9ex, row sep= small]
                 &                         &                                          &             &            &                            &                                                 & \K_{\clT}(x)=\ker(g_x) \arrow[d] \arrow[ld, "\varepsilon_x"', dashed] &     \\
    a_z \arrow[r, "f_z"] & b_z \arrow[r] \arrow[d] & z \arrow[r] \arrow[ld, "\eta_z", dashed] & \Sigma(a_z) & & \Sigma^{-1}(d_x) \arrow[r] & x \arrow[r] & c_x \arrow[r, "g_x"] & d_x. \\
                 & \C_{\clT}(z)=\coker(f_z)             &                                          &             &            &                            &                                                 &                      &    
    \end{tikzcd}
    \]
    \end{rem}

    Proposition~\ref{prop: kernels cover and cokernels envelope} ensures that the assignments defined in Remark~\ref{R: homotopy (co)kernels to (co)kernel functors} augment to well-defined functors.

    \begin{prop}\label{P: homotopy (co)kernels to (co)kernel functors}
        Let \(\clA\) be an abelian subcategory of \(\clT\) satisfying \(\clT(\clA,\Sigma^{-1}\clA)=0\). The assignments defined in Remark~\ref{R: homotopy (co)kernels to (co)kernel functors} augment to well-defined functors
        \[
        \clA * \Sigma\clA \xrightarrow[]{\C_{\clT}} \clA
         \qquad  \text{and} \qquad
        \Sigma^{-1}\clA * \clA \xrightarrow[]{\K_{\clT}} \clA.
        \]
    \begin{proof}
        We construct the functor \(\clA * \Sigma\clA \xrightarrow[]{\C_{\clT}} \clA\); the other functor is dual. Let
        \[
        z \xrightarrow[]{w} z'
        \]
        be a morphism in \(\clA * \Sigma\clA\). Then we have our fixed choice of triangles
        \[
        a_z \xrightarrow[]{f_z} b_z \xrightarrow{} z \xrightarrow{} \Sigma (a_z)
        \qquad \text{and} \qquad
        a_{z'} \xrightarrow{f_{z'}} b_{z'} \xrightarrow[]{} z' \xrightarrow{} \Sigma (a_{z'})
        \]
        with \(a_z,b_z,a_{z'}\) and \(b_{z'}\) in \(\clA\). By Proposition~\ref{prop: kernels cover and cokernels envelope}, the morphisms \(z \xrightarrow[]{\eta_z} \C_{\clT}(z)\) and \(z' \xrightarrow[]{\eta_{z'}} \C_{\clT}(z')\) that arose by the axioms of triangulated categories in Remark~\ref{R: homotopy (co)kernels to (co)kernel functors} are strong \(\clA\)-envelopes. Since \(\eta_z\) is a strong \(\clA\)-envelope, the composition \(z \xrightarrow[]{w} z' \xrightarrow[]{\eta_{z'}} \C_{\clT}(z')\) induces a unique morphism in \(\clA\)
        \[
        \C_{\clT}(z) \xrightarrow[]{\C_{\clT}(w)} \C_{\clT}(z'),
        \]
        making the following diagram commute
        \[
        \begin{tikzcd}
            z \arrow[d, "w"'] \arrow[r, "\eta_z"] & \C_{\clT}(z) \arrow[ldd, "\C_{\clT}(w)", dashed] \\
            z' \arrow[d, "\eta_{z'}"']            &                                    \\
            \C_{\clT}(z')                                &                                   
        \end{tikzcd}
        \]
        The uniqueness of \(\C_{\clT}(w)\) ensures that the assignment \(w \xmapsto{} \C_{\clT}(w)\) is functorial.
    \end{proof}
    \end{prop}

    \begin{rem}
        The functors constructed in Proposition~\ref{P: homotopy (co)kernels to (co)kernel functors} depend, \emph{a priori}, on three global choices:
        \begin{enumerate}
            \item A choice of (co)kernel for each morphism in \(\clA\).
        
            \item A choice of triangle for each object in \(\clA * \Sigma\clA\) and for each object in \(\Sigma^{-1}\clA*\clA\).
            
            \item A choice of filling morphism between the objects chosen in~(2) and the corresponding (co)kernel chosen in~(1), obtained from the axioms of triangulated categories.
        \end{enumerate}
        The choices in~(1) and~(2) are independent of one another, whereas the choice in~(3) depends on both. However, the following lemma shows that this dependence is harmless, and any such choices give rise to functors that are canonically naturally isomorphic.
    \end{rem}

    \begin{lem}\label{L: you have no choice}
        Consider Remark~\ref{R: homotopy (co)kernels to (co)kernel functors}. Let \(\clA\) be an abelian subcategory of \(\clT\) satisfying \(\clT(\clA,\Sigma^{-1}\clA)=0\).
    \begin{enumerate}
    \item Suppose that we are given two global choices of cokernels for morphisms in \(\clA\), denoted by
        \[
        f \mapsto \coker(f)
        \qquad\text{and}\qquad
        f \mapsto \widetilde{\coker}(f),
        \]
        and that for each object \(z\) in \(\clA * \Sigma\clA\), we are given two commutative diagrams
        \begin{center}
        \begin{tikzcd}[row sep=small, column sep=small]
        a_z \arrow[r, "f_z"] & b_z \arrow[r] \arrow[d] & z \arrow[r] \arrow[ld, "\eta_z"] & \Sigma(a_z)
        & \text{and} &
        a'_z \arrow[r, "f'_z"] & b'_z \arrow[r] \arrow[d] & z \arrow[r] \arrow[ld, "\theta_z"] & \Sigma(a'_z) \\
        & \coker(f_z) & & & & & \widetilde{\coker}(f'_z) & &
        \end{tikzcd}
        \end{center}
        in which the rows are triangles in \(\clT\) and the objects \(a_z,b_z,a'_z\), and \(b'_z\) lie in \(\clA\). Then, by Proposition~\ref{P: homotopy (co)kernels to (co)kernel functors}, the assignment \(z \mapsto \coker(f_z)\) extends to a functor
        \[
        \clA * \Sigma\clA \xrightarrow[]{\C_{\clT}} \clA,
        \]
        and the assignment \(z \mapsto \widetilde{\coker}(f'_z)\) extends to a functor
        \[
        \clA * \Sigma\clA \xrightarrow[]{\widetilde{\C_{\clT}}} \clA.
        \]
        These functors are canonically naturally isomorphic.

    \item Suppose that we are given two global choices of kernels for morphisms in \(\clA\), denoted by
        \[
        g \mapsto \ker(g)
        \qquad\text{and}\qquad
        g \mapsto \widetilde{\ker}(g),
        \]
        and that for each object \(x\) in \(\Sigma^{-1}\clA * \clA\), we are given two commutative diagrams
        \begin{center}
        \begin{tikzcd}[row sep=small, column sep=small]
        &   & \ker(g_x) \arrow[d] \arrow[ld, "\varepsilon_x"'] &     &            &                             &             & \ker(g'_x) \arrow[d] \arrow[ld, "\phi_x"'] &      \\
        \Sigma^{-1}(d_x) \arrow[r] & x \arrow[r] & c_x \arrow[r, "g_x"]                             & d_x & \text{and} & \Sigma^{-1}(d'_x) \arrow[r] & x \arrow[r] & c'_x \arrow[r, "g'_x"]                     & d'_x
        \end{tikzcd}
        \end{center}
        in which the rows are triangles in \(\clT\) and the objects \(c_x,d_x,c'_x\), and \(d'_x\) lie in \(\clA\). Then, by Proposition~\ref{P: homotopy (co)kernels to (co)kernel functors}, the assignment \(x \mapsto \ker(g_x)\) extends to a functor
        \[
        \Sigma^{-1}\clA * \clA \xrightarrow[]{\K_{\clT}} \clA,
        \]
        and the assignment \(x \mapsto \widetilde{\ker}(g'_x)\) extends to a functor
        \[
        \Sigma^{-1}\clA * \clA \xrightarrow[]{\widetilde{\K_{\clT}}} \clA.
        \]
        These functors are canonically naturally isomorphic.
    \end{enumerate}
    \begin{proof}
    \textit{Part 1.} By Proposition~\ref{prop: kernels cover and cokernels envelope}, for each object \(z_1\) in \(\clA * \Sigma\clA\), the morphisms \(\eta_{z_1}\) and \(\theta_{z_1}\) are both strong \(\clA\)-envelopes of the same object. Hence, there exists a unique isomorphism
    \[
    \coker(f_{z_1}) \xrightarrow[]{\alpha_{z_1}} \widetilde{\coker}(f'_{z_1})
    \]
    such that \(\alpha_{z_1}\eta_{z_1}=\theta_{z_1}\). We claim that the collection
    \[
    \{\alpha_{z_1} \mid z_1 \text{ an object in } \clA * \Sigma\clA\}
    \]
    forms the components of a natural isomorphism \(\C_{\clT} \xrightarrow[]{\alpha} \widetilde{\C_{\clT}}\).\footnote{We stress that the components of the natural isomorphism \(\C_{\clT} \xrightarrow[]{} \widetilde{\C_{\clT}}\) are induced from the strong \(\clA\)-envelopes, rather than from the universal property of the cokernels.} Let \(z_1 \xrightarrow[]{w} z_2\) be a morphism in \(\clA * \Sigma\clA\). Then, by definition of the functors \(\C_{\clT}\) and \(\widetilde{\C_{\clT}}\), and the isomorphisms \(\alpha_{z_i}\), we have a diagram
    \[
    \begin{tikzcd}[column sep=small, row sep=small]
        \coker(f_{z_1}) \arrow[ddd, "\C_{\clT}(w)"'] \arrow[rr, "\alpha_{z_1}"] & & \widetilde{\coker}(f'_{z_1}) \arrow[ddd, "\widetilde{\C_{\clT}}(w)"] \\
        & z_1 \arrow[lu, "\eta_{z_1}"] \arrow[ru, "\theta_{z_1}"'] \arrow[d, "w"] & \\
        & z_2 \arrow[ld, "\eta_{z_2}"'] \arrow[rd, "\theta_{z_2}"] & \\
        \coker(f_{z_2}) \arrow[rr, "\alpha_{z_2}"'] & & \widetilde{\coker}(f'_{z_2})
    \end{tikzcd}
    \]
    in which all regions commute, except possibly the outer square with vertices \(\coker(f_{z_1})\), \(\widetilde{\coker}(f'_{z_1})\), \(\coker(f_{z_2})\) and \(\widetilde{\coker}(f'_{z_2})\). We have
    \[
    \widetilde{\C_{\clT}}(w) \alpha_{z_1} \eta_{z_1} = \widetilde{\C_{\clT}}(w) \theta_{z_1} = \theta_{z_2} w = \alpha_{z_2} \eta_{z_2} w = \alpha_{z_2} \C_{\clT}(w) \eta_{z_1},
    \]
    and as \(\eta_{z_1}\) is a strong \(\clA\)-envelope, this implies that
    \[
    \widetilde{\C_{\clT}}(w)\alpha_{z_1} = \alpha_{z_2}\C_{\clT}(w).
    \]
    Thus \(\alpha\) is a natural isomorphism.

    \textit{Part 2.} This is dual to Part~1.
    \end{proof}
    \end{lem}

    The strong \(\clA\)-approximations in Proposition~\ref{prop: kernels cover and cokernels envelope} compare homotopy (co)kernels with honest (co)kernels, and moreover promote the functors of Proposition~\ref{P: homotopy (co)kernels to (co)kernel functors} to adjoints. Before turning to this, note that there are inclusions
    \[
    \clA \subseteq \clA * \Sigma\clA
    \qquad\text{and}\qquad
    \clA \subseteq \Sigma^{-1}\clA * \clA,
    \]
    since each object \(a\) in \(\clA\) fits into the triangle
    \[
    0 \xrightarrow[]{} a \xrightarrow[]{\id_a} a \xrightarrow[]{} 0,
    \]
    and \(0\) lies in both \(\Sigma\clA\) and \(\Sigma^{-1}\clA\).
    
    \begin{cor}\label{C: 2-term adj}
        Consider the strong approximations in Proposition~\ref{prop: kernels cover and cokernels envelope} and the functors in Proposition~\ref{P: homotopy (co)kernels to (co)kernel functors}. Let \(\clA\) be an abelian subcategory of \(\clT\) such that
        \(\clT(\clA,\Sigma^{-1}\clA)=0\). Then the following hold:
        \begin{itemize}
        \item[(1)] The inclusion
        \[
        \clA \xrightarrow{} \clA * \Sigma\clA
        \]
        admits \(\C_{\clT}\) as a left adjoint, with the unit given by \(\eta\).
        
        \item[(2)] The inclusion 
        \[
        \clA \xrightarrow[]{} \Sigma^{-1}\clA * \clA
        \]
        admits \(\K_{\clT}\) as a right adjoint, with the counit given by \(\varepsilon\).
        \end{itemize}
    \begin{proof}
        \emph{Part 1.} We denote the inclusion functor by \(\clA \xrightarrow{\iota} \clA * \Sigma\clA\). Let \(z\) be an object in \(\clA * \Sigma\clA\). The assignment \(z \xmapsto{} \C_{\clT}(z)\) induces a representation of the Hom functor \(\clA * \Sigma\clA(z,\iota(-))\): we have natural isomorphisms
        \[
        \clA*\Sigma\clA(z,\iota(-)) = \clT(z,\iota(-)) \xleftarrow[]{\eta_z^*} \clT(\C_{\clT}(z),\iota(-)) = \clA(\C_{\clT}(z),-),
        \]
        where the first and last equalities hold as \(\clA*\Sigma\clA\) and \(\clA\) are both full subcategories of \(\clT\), and the middle isomorphism holds as \(z\xrightarrow[]{\eta_z}\C_{\clT}(z)\) is a strong \(\clA\)-envelope; see Proposition~\ref{prop: kernels cover and cokernels envelope}. Passing the identity \(\id_{\C_{\clT}(z)}\) through this isomorphism gives the final assertion.

        \emph{Part 2.} This follows similarly to Part~1.
    \end{proof}
    \end{cor}

\section{First examples}

    Recall that a finite dimensional algebra is called an \emph{Auslander algebra} \cite{auslander-representation-2-1974} if its global dimension is at most \(2\) and its dominant dimension is at least \(2\). Equivalently, it is the endomorphism algebra of a basic additive generator of a finite dimensional algebra of finite representation type.

    The following example arises as a special case of the constructions in Section~\ref{S: Codistinguished from FF}; see Remark~\ref{R: Auslander algebra revisited}.

    \begin{ex}[Auslander algebras]\label{E: Auslander algebras2}
        Let \(\Gamma\) be an Auslander algebra. Then the category \(\clA\coloneqq\proj\Gamma\) of finitely generated projective \(\Gamma\)-modules is an abelian category. Indeed, \(\clA\) is equivalent to the module category of a finite dimensional algebra of finite representation type. Viewing \(\mod\Gamma\) as the heart of the canonical \(t\)-structure on \(\Db(\mod\Gamma)\), we show that the embedding
        \[
        \clA \subseteq \mod{\Gamma} \subseteq \Db(\mod{\Gamma})
        \]
        is codistinguished. Moreover, it is proper if and only if \(\clA\) is semisimple, equivalently, if and only if the associated finite dimensional algebra of finite representation type is semisimple.

        Since \(\clA\) is an abelian subcategory of a heart, Remark~\ref{R: abelian subcat of heart is E_1} implies that the embedding is codistinguished. Let \(M\) and \(N\) be \(\Gamma\)-modules. There is an isomorphism
        \[
        \Ext^i_\Gamma(M,N) \cong \Hom_{\Db(\mod\Gamma)}(M,N[i]).
        \]
        If \(M\) lies in \(\clA\), that is, if \(M\) is a projective \(\Gamma\)-module, then the left-hand side vanishes for all \(i\neq 0\).
        
        Now let
        \[
        0 \xrightarrow{} P_1 \xrightarrow[]{} P_2 \xrightarrow[]{} P_3 \xrightarrow{} 0
        \]
        be a short exact sequence in \(\clA\). By the above, this sequence induces a triangle
        \[
        P_1 \xrightarrow[]{} P_2 \xrightarrow[]{} P_3 \xrightarrow[]{} P_1[1]
        \]
        in \(\Db(\mod\Gamma)\) if and only if it is split exact. It follows that the embedding is proper if and only if every short exact sequence in \(\clA\) is split, that is, if and only if \(\clA\) is semisimple.
    \end{ex}

    The reader may notice the similarity between the above argument and that of Example~\ref{E: homotopy categories}. This is because Example~\ref{E: Auslander algebras2} is essentially a disguised version of Example~\ref{E: homotopy categories}.

    \begin{rem}
        Consider Example~\ref{E: Auslander algebras2}. Let \(A\) be a finite dimensional algebra of finite representation type whose Auslander algebra is \(\Gamma\). Then there is an equivalence of abelian categories
        \[
        \proj\Gamma \simeq \mod{A},
        \]
        which induces a triangle equivalence
        \[
        \Kb(\proj\Gamma) \simeq \Kb(\mod{A}).
        \]
        Since every Auslander algebra has finite global dimension, there is also a triangle equivalence
        \[
        \Db(\mod\Gamma) \simeq \Kb(\proj\Gamma).
        \]
        Therefore, the embedding
        \[
        \proj\Gamma \subseteq \Db(\mod\Gamma)
        \]
        from Example~\ref{E: Auslander algebras2} may be reinterpreted as
        \[
        \mod{A} \simeq \proj\Gamma \subseteq \Db(\mod\Gamma) \simeq \Kb(\proj\Gamma) \simeq \Kb(\mod{A}),
        \]
        which sends an \(A\)-module to the corresponding stalk complex concentrated in degree zero. In this way, it recovers the embedding considered in Example~\ref{E: homotopy categories} (see Remark~\ref{R: bounded homotopy cod}).
    \end{rem}

    Co-\(t\)-structures were independently defined by Pauksztello \cite{pauksztello-compact-2008}*{Def.\ 2.4} and Bondarko \cite{bondarko-weight-2010}*{Def.\ 1.1.1}; in the latter reference, co-\(t\)-structures were introduced under the name of weight structures, but we follow the terminology of the former. It was said in \cite{jorgensen-cotstructures-2018} that homotopy categories may be regarded as skewed towards co-\(t\)-structures. As evidence to our claim that codistinguished abelian subcategories arise naturally in homotopy categories, we see that cohearts of the standard co-\(t\)-structure associated to Auslander algebras are codistinguished abelian subcategories.

    \begin{prop}\label{P: coheart codistinguished}
        Let \(\Gamma\) be an Auslander algebra. Then the coheart of the standard co-\(t\)-structure on \(\Kb(\proj\Gamma)\) realises \(\proj\Gamma\) as a codistinguished abelian subcategory of \(\Kb(\proj\Gamma)\).
    \begin{proof}
        This follows from Example~\ref{E: Auslander algebras2}.
    \end{proof}
    \end{prop}

   \begin{ex}[Negative extensions]\label{E: negative extensions}
    Let \(A_n\) denote the path algebra of the quiver
    \[
    1 \leftarrow 2 \leftarrow \cdots \leftarrow n.
    \]
    Consider the bounded derived category \(\Db(\mod{A_3})\). Let
    \[
    \clA=\mod A_1 \oplus \mod A_2,
    \]
    and consider the embedding \(\clA \xrightarrow[]{} \Db(\mod{A_3})\) depicted below. More precisely, we identify \(\clA\) with the additive closure
    \[
    \add\left(
    \substack{2\\1}
    \oplus
    \substack{3}
    \oplus
    \substack{2}\substack{[1]}
    \oplus
    \substack{1}\substack{[2]}
    \right) \subseteq \Db(\mod{A_3}).
    \]
    Thus the image of \(\clA\) is highlighted in the following fragment of the Auslander--Reiten quiver of \(\Db(\mod{A_3})\):
    \[
    \begin{tikzpicture}[>={Computer Modern Rightarrow}, every node/.style={inner sep=2pt}]
        
        \node (dotsL) at (-1.2,0) {\(\cdots\)};

        \node (M1) at (0,-1.2) {\(\substack{1}\)};
        \node (M21) at (1,0) {\(\boldsymbol{\substack{2\\1}}\)};
        \node (M3m1) at (0.2,1.2) {\(\substack{3}\substack{[-1]}\)};

        \node (M321) at (2,1.2) {\(\substack{3\\2\\1}\)};
        \node (M2) at (2,-1.2) {\(\substack{2}\)};
        \node (M32) at (3,0) {\(\substack{3\\2}\)};

        \node (M1p1) at (4,1.2) {\(\substack{1}\substack{[1]}\)};
        \node (M3) at (4,-1.2) {\(\boldsymbol{\substack{3}}\)};
        \node (M21p1) at (5,0) {\(\substack{2\\1}\substack{[1]}\)};

        \node (M2p1) at (6,1.2) {\(\boldsymbol{\substack{2}\substack{[1]}}\)};
        \node (M321p1) at (6,-1.2) {\(\substack{3\\2\\1}\substack{[1]}\)};
        \node (M32p1) at (7,0) {\(\substack{3\\2}\substack{[1]}\)};

        \node (M3p1) at (8,1.2) {\(\substack{3}\substack{[1]}\)};
        \node (M1p2) at (8,-1.2) {\(\boldsymbol{\substack{1}\substack{[2]}}\)};

        \node (dotsR) at (9.2,0) {\(\cdots\)};

        \draw[->] (M1) -- (M21);
        \draw[->] (M3m1) -- (M21);

        \draw[->] (M21) -- (M321);
        \draw[->] (M21) -- (M2);

        \draw[->] (M321) -- (M32);
        \draw[->] (M2) -- (M32);

        \draw[->] (M32) -- (M1p1);
        \draw[->] (M32) -- (M3);

        \draw[->] (M1p1) -- (M21p1);
        \draw[->] (M3) -- (M21p1);

        \draw[->] (M21p1) -- (M2p1);
        \draw[->] (M21p1) -- (M321p1);

        \draw[->] (M2p1) -- (M32p1);
        \draw[->] (M321p1) -- (M32p1);

        \draw[->] (M32p1) -- (M3p1);
        \draw[->] (M32p1) -- (M1p2);

        \node[draw=myorange, dashed, fit=(M21), inner sep=3pt] {};
        \node[draw=myorange, dashed, fit=(M3), inner sep=3pt] {};
        \node[draw=myorange, dashed, fit=(M2p1), inner sep=3pt] {};
        \node[draw=myorange, dashed, fit=(M1p2), inner sep=3pt] {};
    \end{tikzpicture}
    \]

    Every triangle in \(\Db(\mod{A_3})\) of the form
    \[
    L \xrightarrow[]{} M \xrightarrow[]{} N \xrightarrow[]{} L[1],
    \]
    with \(L\), \(M\), and \(N\) in \(\clA\), splits. Hence the embedding is codistinguished. On the other hand, it is not proper. Indeed, the Hom space
    \[
    \Hom_{\Db(\mod{A_3})}\left(\substack{1}\substack{[2]},\substack{3}\substack{[1]}\right)
    \]
    vanishes, whereas there is a non-split short exact sequence
    \[
    0 \xrightarrow[]{} \substack{3} \xrightarrow[]{} \substack{2}\substack{[1]} \xrightarrow[]{} \substack{1}\substack{[2]} \xrightarrow[]{} 0
    \]
    corresponding to the Auslander-Reiten sequence in \(\clA\).

    Finally, note that the irreducible morphism \(\substack{2\\1} \xrightarrow[]{} \substack{2}\) yields a nonzero element of
    \[
    \Hom_{\Db(\mod{A_3})}(\clA,\clA[-1]).
    \]
    Thus this shows that the converse of Corollary~\ref{C: E_1 construction} fails in general; see Remark~\ref{R: converse to E_1}.
    \end{ex}

\begin{rem}
    Linckelmann's example \cite{linckelmann-abelian-2024}*{Ex.\ 9.5} gives an abelian subcategory that is not distinguished, but which turns out to be codistinguished. The ambient triangulated category is \(\psmod\Lambda\), where \(\Lambda\) is a finite dimensional selfinjective algebra, and the embedding is a composition
    \[
        \clB\xrightarrow{}\clA\xrightarrow{}\psmod\Lambda.
    \]
    Here \(\clB\to\clA\) is full, non-exact, and reflects short exact sequences, while \(\clA\to\psmod\Lambda\) is proper by \cite{linckelmann-abelian-2024}*{Cor.\ 6.3}. Hence the composite is codistinguished.
\end{rem}

\section{Codistinguished embeddings and exact structures}

   Consider a codistinguished abelian subcategory of a triangulated category. There are two natural types of short exact sequences: those that induce triangles, and those that do not. This motivates the following definition.

    \begin{defn}\label{D: not dist SES}
        Let \(\clA\) be an abelian subcategory of a triangulated category \(\clT\). A short exact sequence in \(\clA\)
        \[
        0 \xrightarrow[]{} x \xrightarrow[]{f} y \xrightarrow[]{g} z \xrightarrow[]{} 0
        \]
        is \emph{distinguished} if there exists a triangle in \(\clT\) of the form
        \[
        x \xrightarrow[]{f} y \xrightarrow[]{g} z \xrightarrow[]{}\Sigma x.
        \]
        Otherwise, the sequence is \emph{non-distinguished}. Let  \(\clE_d^{\clT}\) denote the collection of all distinguished short exact sequences in \(\clA\).
    \end{defn}

    \begin{rem}
        If \(\clE_d^{\clT}\) contains every short exact sequence in \(\clA\), then \(\clA\) is distinguished in the sense of \cite{linckelmann-abelian-2024}.
    \end{rem}

        In general, discarding the non-distinguished short exact sequences in a codistinguished abelian subcategory does not leave behind any meaningful structure. However, if the codistinguished abelian subcategory is extension closed, then the remaining short exact sequences define an exact structure in the sense of Quillen \cite{quillen-higher-1973}. This gives an analogue of Dyer's theorem \cite{dyer-exact-2005}.

    \begin{thm}\label{thm: exact categories from codistinguished}
        Let \(\clA\) be an extension closed codistinguished abelian subcategory of a triangulated category \(\clT\). Then the pair \((\clA,\clE_d^{\clT})\), see Definition~\ref{D: not dist SES}, is an exact category.
    \begin{proof}
        This statement follows by the proof of \cite{jorgensen-abelian-2022}*{Thm.\ 3.5(i)} (see also \cite{dyer-exact-2005}). One just needs to note that the sequences in \(\clE_d^{\clT}\) are indeed kernel--cokernel pairs by definition, and the codistinguished direction is sufficient to ensure that the arguments used there remain valid.
    \end{proof}
    \end{thm}

    In the next result, we consider the bounded derived category of an exact category. An early treatment of bounded derived categories of exact categories appears in \cite{thomason-higher-1990}*{1.11.6 and App.\ A}, and a more general treatment is given in \cite{neeman-derived-1990}. We also recommend \cites{keller-derived-1996,krause-homological-2022}. The result below shows that codistinguished embeddings arise naturally and in abundance. But first, a little lemma needed for the proof of Theorem~\ref{T: codistinguished from exact categories}.

    \begin{lem}\label{L: compatibility connecting morphisms}
    Let \((\clB,\clE)\) be an exact category and 
    \[
    \xi\colon
    0 \xrightarrow[]{} a \xrightarrow[]{f} b \xrightarrow[]{g} c \xrightarrow[]{} 0
    \]
    be a conflation in \((\clB,\clE)\). Let
    \[
    a \xrightarrow[]{f} b \xrightarrow[]{g} c \xrightarrow[]{h_\xi} a[1]
    \]
    be the triangle in \(\bDer(\clB,\clE)\) associated to \(\xi\) by
    \cite{krause-homological-2022}*{Lem.\ 4.1.12}. Then, under the natural isomorphism
    \[
    \Ext_{(\clB,\clE)}(c,a)
    \xrightarrow[]{\Phi}
    \Hom_{\bDer(\clB,\clE)}(c,a[1])
    \]
    of \cite{krause-homological-2022}*{Prop.\ 4.2.11}, one has
    \[
    \Phi([\xi])=h_\xi.
    \]
    \begin{proof}
      Let
        \[
        Q\colon \Kb(\clB)\xrightarrow[]{}\bDer(\clB,\clE)
        \]
        denote the localisation functor. We regard objects of \(\clB\) as stalk
        complexes concentrated in degree \(0\). 
        Denote the cone of the morphism $f$ by $C_f$. Then in \(\Kb(\clB)\) there is a triangle
        \[
        a\xrightarrow[]{f} b \xrightarrow[]{} C_f \xrightarrow[]{\pi} a[1].
        \]
        The proof of \cite{krause-homological-2022}*{Lem.\ 4.1.12} uses the
        quasi-isomorphism $ C_f\xrightarrow[]{q}c$
        induced by \(b\xrightarrow[]{g} c\). 
        Hence the connecting morphism of the triangle associated to \(\xi\) is
        \[
        h_\xi
        =
        c\xrightarrow[]{Q(q)^{-1}} C_f \xrightarrow[]{Q(\pi)} a[1].
        \]
        
        On the other hand, \cite{krause-homological-2022}*{Prop.\ 4.2.11} associates
        to \(\xi\) the complex
        \[
        E_\xi\colon
        \cdots \xrightarrow[]{} 0 \xrightarrow[]{} b
        \xrightarrow[]{g} c \xrightarrow[]{} 0 \xrightarrow[]{} \cdots,
        \]
        with \(b\) in degree \(-1\) and \(c\) in degree \(0\). Then the identity morphism on $c$ induces a morphism of complexes $c\xrightarrow[]{\xi_{\id_c}}E_\xi$ and the morphism $a \xrightarrow[]{f} b$ induces a morphism of complexes $a[1]\xrightarrow[]{\xi_f}E_\xi$.
        The morphism \(\xi_f\) is a quasi-isomorphism and by construction, we have
        \[
        \Phi([\xi])
        =
        c \xrightarrow[]{Q(\xi_{\id_c})} E_{\xi}
        \xrightarrow[]{Q(\xi_f)^{-1}} a[1].
        \]
        We claim that the diagram
        \[
        \begin{tikzcd}
        C_f \arrow[d, "q"'] \arrow[r, "\pi"] & a[1] \arrow[d, "\xi_f"] \\
        c \arrow[r, "\xi_{\id_c}"']                & E_\xi
        \end{tikzcd}
        \]
        commutes in \(\Kb(\clB)\). Indeed, consider the difference
        \[
        \xi_f\pi-\xi_{\id_c}q\colon C_f\xrightarrow[]{}E_\xi,
        \]
        which is represented by
        \[
        \begin{tikzcd}[column sep=2.8em]
        C_f\colon\quad
        \cdots \arrow[r] &
        0 \arrow[r] &
        a \arrow[r, "f"] \arrow[d, "-f"'] &
        b \arrow[r] \arrow[d, "-g"'] &
        0 \arrow[r] &
        \cdots \\
        E_\xi\colon\quad
        \cdots \arrow[r] &
        0 \arrow[r] &
        b \arrow[r, "g"] &
        c \arrow[r] &
        0 \arrow[r] &
        \cdots .
        \end{tikzcd}
        \]
        This morphism is null-homotopic. Hence
        \[
        Q(\xi_f)Q(\pi)=Q(\xi_{\id_c})Q(q)
        \]
        in \(\bDer(\clB,\clE)\). Since both \(q\) and \(\xi_f\) are
        quasi-isomorphisms, it follows that
        \[
        h_\xi
        =
        Q(\pi)Q(q)^{-1}
        =
        Q(\xi_f)^{-1}Q(\xi_{\id_c})
        =
        \Phi([\xi]). \qedhere
        \]
    \end{proof}
    \end{lem}

    \begin{thm}\label{T: codistinguished from exact categories}
        Let \(\clA\) be an abelian category, and suppose that a class of short exact sequences \(\clE\) endows \(\clA\) with the structure of an exact category \((\clA,\clE)\). Then the embedding
        \[
            \clA \xrightarrow[]{} \bDer(\clA,\clE),
        \]
        sending an object to the stalk complex concentrated in degree \(0\), realises \(\clA\) as an extension closed codistinguished abelian subcategory. Moreover, if there exists a short exact sequence in \(\clA\) that does not lie in \(\clE\), then the embedding is not proper.
    \begin{proof}
        Let
        \[
        a\xrightarrow[]{f} b \xrightarrow[]{g} c \xrightarrow[]{h} a[1]
        \]
        be a triangle in which \(a\), \(b\), and \(c\) are objects of \(\clA\). By
        \cite{krause-homological-2022}*{Prop.\ 4.2.11}, there is a natural isomorphism
        \[
        \Ext_{(\clA,\clE)}(c,a)
        \xrightarrow[]{\Phi}
        \Hom_{\bDer(\clA,\clE)}(c,a[1]).
        \]
        Choose a conflation
        \[
        \xi\colon
        0 \xrightarrow[]{} a \xrightarrow[]{f'} b' \xrightarrow[]{g'} c \xrightarrow[]{} 0
        \]
        representing \(\Phi^{-1}(h)\). By \cite{krause-homological-2022}*{Lem.\ 4.1.12}, this conflation induces a triangle
        \[
        a\xrightarrow[]{f'} b' \xrightarrow[]{g'} c \xrightarrow[]{h_\xi} a[1].
        \]
        By Lemma~\ref{L: compatibility connecting morphisms}, we have
        \[
        h_\xi=\Phi([\xi])=h.
        \]
        By the axioms of triangulated categories, there is an isomorphism \(b\xrightarrow[]{}b'\) rendering the following diagram commutative
        \[
        \begin{tikzcd}
            a \arrow[r, "f"] \arrow[d, equal] & b \arrow[r, "g"] \arrow[d, dashed] & c \arrow[r, "h"] \arrow[d, equal] & \Sigma{a} \arrow[d, equal] \\
            a \arrow[r, "f'"]                 & b' \arrow[r, "g'"]                 & c \arrow[r, "h_\xi"]                 & \Sigma{a}.
        \end{tikzcd}
        \]
        Therefore the sequence
        \[
        0 \xrightarrow[]{} a \xrightarrow[]{f} b \xrightarrow[]{g} c \xrightarrow[]{} 0
        \]
        is isomorphic to the conflation \(\xi\). Since conflations are closed under isomorphism, it is itself a conflation. The final statement follows immediately from the bijection \(\Phi\) and Lemma~\ref{L: compatibility connecting morphisms}.
    \end{proof}
    \end{thm}

    \begin{rem}\label{R: Correspondence between exact cat and ext closed cod}
    Let \(\clA\) be an abelian category. Theorem~\ref{thm: exact categories from codistinguished} and Theorem~\ref{T: codistinguished from exact categories} yield a non-bijective correspondence between the following:
    \begin{itemize}
        \item[1.] Exact structures \(\clE\) on \(\clA\).
        \item[2.] Triangulated categories \(\clT\) equipped with an extension closed codistinguished embedding of \(\clA\).
    \end{itemize}

    The assignments from \(1\) to \(2\) and from \(2\) to \(1\) are given by
    \begin{alignat*}{3}
        (\clA,\clE)
            &\qquad& \xmapsto{\varphi} &\qquad& {}&
            \clA \xrightarrow[]{} \bDer(\clA,\clE)
        \\
        \shortintertext{and, conversely,}
        \clA \xrightarrow[]{} \clT
            &\qquad& \xmapsto{\psi} &\qquad& {}&
            (\clA,\clE_d^{\clT}).
    \end{alignat*}
    These assignments are well-defined by Theorem~\ref{T: codistinguished from exact categories} and Theorem~\ref{thm: exact categories from codistinguished}, respectively. By construction, \(\psi\varphi\) is the identity. Although the composite \(\varphi\psi\) need not be the identity, if \(\clT(\clA,\Sigma^{<0}\clA)=0\), then it induces a realisation functor by \cite{keller-sous-1987}*{3.2 Thm.\ on p.\ 228}; see also \cite{letz-realization-2025}*{Thm.\ 1.1}.
\end{rem}
    
    We have seen that an extension closed codistinguished abelian subcategory of a triangulated category \(\clT\) gives rise to a natural exact category whose short exact sequences are precisely those induced by triangles in \(\clT\). In the next result, we show that in a \(2\)-Calabi--Yau setting this exact category is in fact Frobenius.

    Let \(\clY\) and \(\clZ\) be additive subcategories of an additive category \(\clX\), and let \(Y\) be an object of \(\clY\). A \emph{\(\clZ\)-preenvelope} of \(Y\) is a morphism \(Y\xrightarrow{e}Z\), with \(Z\) in \(\clZ\), such that for each object \(Z'\) in \(\clZ\), the induced group homomorphism \(\clX(Z,Z') \xrightarrow{\clX(e,Z')} \clX(Y,Z')\) is surjective.

    \begin{thm}\label{T: Frobenius exact}
        Let \(k\) be a field. Let \(\clC\) be a Hom-finite idempotent complete \(2\)-Calabi--Yau \(k\)-linear triangulated category. Let \(\clA\) be an extension closed codistinguished abelian subcategory of \(\clC\) such that every object of \(\clA\) admits a \(\Sigma\clA\)-preenvelope. Then the exact category \((\clA,\clE_d^{\clC})\), from Theorem~\ref{thm: exact categories from codistinguished}, is a Frobenius exact category.
    \begin{proof}
        By Theorem~\ref{thm: exact categories from codistinguished}, \((\clA,\clE_d^{\clC})\) is an exact category. By \cite{jorgensen-abelian-2022}*{Prop.\ 3.6}, the existence of \(\Sigma\clA\)-preenvelopes ensures that the exact category has enough projective objects. Since \(\clC\) is \(2\)-Calabi--Yau, we have natural isomorphisms
        \[
        \clC(x,\Sigma(-)) \cong \D^2\clC(x,\Sigma(-)) \cong \D\clC(-,\Sigma x),
        \]
        for each object \(x\) in \(\clC\). Hence \(\clC(x,\Sigma a)=0\) for all \(a\) in \(\clA\) if and only if \(\clC(a,\Sigma x)=0\) for all \(a\) in \(\clA\). It follows that the projective objects coincide with the injective objects. Therefore \((\clA,\clE_d^{\clC})\) is a Frobenius exact category.
    \end{proof}
    \end{thm}

\section{Codistinguished embeddings from fully faithful adjunction triples}\label{S: Codistinguished from FF}

    In this section, we construct new codistinguished abelian subcategories from existing ones. We begin with an adjunction triple between additive categories \((L,E,R)\)
    \[
    \begin{tikzcd}[row sep=large, column sep = large]
    \clA \arrow[r, "E" description] & \clC, \arrow[l, "L"', bend right] \arrow[l, "R", bend left]
    \end{tikzcd}
    \]
    where both \(L\) and \(R\) are fully faithful. In the situation that \(\clA\) is a codistinguished abelian subcategory of a triangulated category \(\clT\), we show that such an adjunction triple will produce three possibly distinct realisations of \(\clC\) as a codistinguished abelian subcategory of \(\clT\). In particular, we do not assume \textit{a priori} that \(\clC\) is an abelian category; this will follow automatically.
    
    Two of these realisations follow directly from the properties of the functors \(L\) and \(R\). The third is more subtle: it is obtained from a canonical functor
    \[
    \clC\xrightarrow[]{C}\clA
    \]
    which lies between \(L\) and \(R\). Following \cite{kuhn-generic-1994}*{Sec.\ 4}, we call this the \emph{intermediate extension functor} associated to the adjunction triple \((L,E,R)\).
    
\subsection{Fully faithful adjunction triples}
    
    We introduce some basic operations internal to the 2-category of small categories. For an accessible introduction, we recommend \cite{riehl-elements-2022}*{App.\ B}.
    
    Let \(\clX\xrightarrow[]{F,G}\clY\) be two functors. A natural transformation \(F\xrightarrow[]{\alpha}G\) between them may be depicted as
     \[
        \xymatrix@R=2cm@C=1.5cm{
        &\clX \rtwocell<5>^F_G{\alpha}
        &\clY.
        }
    \]
    
    A pair of natural transformations \(F\xrightarrow[]{\alpha}G\xrightarrow[]{\beta}H\) admits a \emph{(vertical) composite} \(F\xrightarrow{\beta\alpha}H\):
    \vspace{-4ex}
    \[
    \xymatrix@R=2cm @C=1.5cm{
    \clX \ruppertwocell<9>^F{\alpha}
    \ar[r]|{\: G\:}
    \rlowertwocell<-9>_H{\beta}
    & \clY
    &= 
    &\clX \rtwocell<5>^F_H{\hspace{0.56em}\beta\alpha}
    &\clY,
    }
    \]
    \vspace{-3ex}\\
    whose component at an object \(x\) in \(\clX\) is the morphism \(Fx\xrightarrow[]{\beta_x\alpha_x}Hx\) in \(\clY\). 
    
    Similarly, a pair of natural transformations 
    \[
    \xymatrix@R=2cm @C=1.5cm{
    \clX \rtwocell<5>^F_G{\alpha} &\clY \rtwocell<5>^I_J{\gamma}&\clZ
    }
    \]
    admits a \emph{horizontal composite}
    \[
    \xymatrix@R=2cm @C1.5cm{
    \clX \rtwocell<5>^{IF}_{JG}{\hspace{0.8em}\gamma*\alpha}&\clZ,
    }
    \]
    whose component at an object \(x\) in \(\clX\) is the diagonal morphism of the naturality square in \(\clZ\):
    \[
    \begin{tikzcd}[row sep=1.2cm, column sep=1.2cm]
        IFx \arrow[d, "I(\alpha_x)"] \arrow[r, "\gamma_{Fx}"] & JFx \arrow[d, "J(\alpha_x)"] \\
        IGx \arrow[r, "\gamma_{Gx}"]                          & JGx.             
    \end{tikzcd}
    \]
    
    Two useful special cases arise by taking \(\alpha=\id_F\) or \(\gamma=\id_I\). In these cases,  we call the horizontal composites \(I\alpha\coloneqq\id_I*\alpha\) and \(\gamma F\coloneqq\gamma*\id_F\), \emph{whiskered composites}, and depict them as
    \[
    \xymatrix@R=2cm @C=1.5cm{
    \clX \rtwocell<5>^F_G{\alpha} &\clY \ar[r]^{I}&\clZ,
    }
    \]
    \[
    \xymatrix@R=2cm @C=1.5cm{
    \clX \ar[r]^{F} &\clY \rtwocell<5>^I_J{\gamma}&\clZ.
    }
    \]

    Vertical and horizontal composition interact coherently: a diagram of natural transformations
    \vspace{-3ex}
    \[
    \xymatrix@R=2cm @C1.5cm{
    \clX \ruppertwocell<9>^F{\alpha}
    \ar[r]|{\: G\:}
    \rlowertwocell<-9>_H{\beta}
    & \clY \ruppertwocell<9>^I{\gamma}
    \ar[r]|{\: J\:}
    \rlowertwocell<-9>_K{\delta}
    & \clZ
    }
    \]
    \vspace{-3ex}\\
    satisfies the \emph{interchange law} \((\delta\gamma)*(\beta\alpha) = (\gamma*\alpha)(\delta*\beta)\).

    As our aim is to work with adjunction triples whose leftmost and rightmost functors are fully faithful, we now analyse how fully faithful functors interact with adjunctions. The following lemma characterises objects in the essential image of a fully faithful adjoint.

    \begin{lem}\label{L: EImg of FF adjoint}
        Let \(\clX\) and \(\clY\) be categories and consider an adjunction pair \((F,G)\)
    \[
    \begin{tikzcd}
    \clX \arrow[r, "F", shift left] & \clY. \arrow[l, "G", shift left]
    \end{tikzcd}
    \]
    Suppose that \(F\) is fully faithful. Let \(\id_{\clX}\xrightarrow[]{\eta}GF\) and \(FG \xrightarrow[]{\varepsilon} \id_{\clY}\) denote the unit and the counit of the adjunction, respectively. Then an object \(y\) in \(\clY\) is in the essential image of \(F\) if and only if \(\varepsilon_y\) is an isomorphism.
    \begin{proof}
    The if implication \((\impliedby)\) is clear. Conversely, suppose that \(y\) is in the essential image of \(F\). Then there exists an object \(x\) in \(\clX\) and an isomorphism \(Fx\xrightarrow{\alpha} y\). Since \(F\) is fully faithful, the unit \(\id_{\clX}\xrightarrow[]{\eta} GF\) is a natural isomorphism; see \cite{maclane-categories-1998}*{IV.3, Thm.\ 1}. Hence \(Fx\xrightarrow[]{F\eta_x} FGFx\) is an isomorphism. By the triangle identity, we have \(\varepsilon_{Fx}\circ F\eta_x=\id_{Fx}\). Since \(F\eta_x\) is an isomorphism, it follows that \(\varepsilon_{Fx}\) is an isomorphism.

    Now consider the naturality square for the counit applied to the isomorphism \(\alpha\):
    \[
    \begin{tikzcd}
        FGFx \arrow[r, "FG\alpha"] \arrow[d, "\varepsilon_{Fx}"'] 
            & FGy \arrow[d, "\varepsilon_y"] \\
            Fx \arrow[r, "\alpha"'] 
            & y .
    \end{tikzcd}
    \]
    The morphisms \(\alpha\), \(FG\alpha\), and \(\varepsilon_{Fx}\) are isomorphisms. Therefore, by commutativity, \(\varepsilon_y\) is also an isomorphism.
    \end{proof}
    \end{lem}

    \begin{lem}\label{L: Norm_fully faith adjunction triple}
    Let \(\clX\) and \(\clY\) be categories and consider an adjunction triple \((L,E,R)\)
    \[
    \begin{tikzcd}[row sep=large, column sep = large]
    \clX \arrow[r, "E" description] & \clY. \arrow[l, "L"', bend right] \arrow[l, "R", bend left]
    \end{tikzcd}
    \]
    Then \(L\) is fully faithful if and only if \(R\) is fully faithful. 
    Let the unit and counit of \((L,E)\) be denoted by
    \begin{equation*}
    \id_{\clY} \xrightarrow{\eta^L} EL
    \;\; \text{ and } \;\;
    LE \xrightarrow{\varepsilon^L} \id_{\clX}
    \end{equation*}
    respectively, and let the unit and counit of \((E,R)\) be denoted by
    \begin{equation*}
    \id_{\clX} \xrightarrow{\eta^R} RE
    \;\; \text{ and } \;\;
    ER \xrightarrow{\varepsilon^R} \id_{\clY}
    \end{equation*}
    respectively. If $L$ is fully faithful, then the following statements hold:
    \begin{itemize}
        \item[1.] There exists a natural transformation \(L\xrightarrow[]{\:\mathscr{n}}R\) whose component \(\mathscr{n}_y\) at an object \(y\) in \(\clY\) corresponds to the identity morphism \(\id_y\) under the diagonal natural isomorphism \(\clX(Ly,Ry)\xrightarrow[]{\Phi_{y}}\clY(y,y)\) of the commutative square
        \begin{equation}\label{eqn: diag nat isom def N}
        \begin{tikzcd}
            {\clX(Ly,Ry)} \arrow[d, "(\operatorname{adj}^E_R)^{-1}", swap] \arrow[r, "\operatorname{adj}^L_E"] & {\clY(y,ERy)} \arrow[d, "(\varepsilon^R_y)_*"] \\
            {\clY(ELy,y)} \arrow[r, "(\eta^L_y)^*", swap]                               & {\clY(y,y)}.
        \end{tikzcd}
        \end{equation}
        Here \(\operatorname{adj}^L_E\) and \(\operatorname{adj}^E_R\) denote the adjunction isomorphisms associated to the adjunction pairs \((L,E)\) and \((E,R)\), respectively.
        \item[2.] The whiskered composite \(EL\xrightarrow[]{E\mathscr{n}}ER\) coincides with the composite \[EL\xrightarrow[]{(\eta^L)^{-1}}\id_{\clY}\xrightarrow[]{(\varepsilon^R)^{-1}}ER.\]
        In particular, \(E\mathscr{n}\) is a natural isomorphism.
        \item[3.] The whiskered composite \(LE\xrightarrow[]{\mathscr{n} E}RE\) coincides with the composite \[LE\xrightarrow[]{\varepsilon^L}\id_{\clX}\xrightarrow[]{\eta^R}RE.\]
    \end{itemize}
\begin{proof}

    It follows from \cite{borceux-handbook-1-1994}*{Prop.\ 3.4.2} that $L$ is fully faithful if and only if $R$ is fully faithful.

    \textit{Part 1.} Since \(R\) is fully faithful, the counit \(\varepsilon^R\) is a natural isomorphism. Define \(\mathscr{n}\) to be the composite
    \[
    \mathscr{n}\colon L \xrightarrow[]{L((\varepsilon^R)^{-1})} LER \xrightarrow[]{\varepsilon^LR} R.
    \]

    For an object \(y\) in \(\clY\), we compute the image of \(\mathscr{n}_y\) under the diagonal natural isomorphism \(\clX(Ly,Ry)\xrightarrow[]{\Phi_y}\clY(y,y)\) in diagram~\eqref{eqn: diag nat isom def N}. We have
    \begin{align*}
        \Phi_y(\mathscr{n}_y)&=\varepsilon^R_y \circ E\mathscr{n}_y \circ \eta^L_y\\
                &=\varepsilon^R_y \circ E(\varepsilon^L_{Ry})\circ E(L(\varepsilon^R_y)^{-1}) \circ \eta^L_y\\
                &=\varepsilon^R_y \circ E(\varepsilon^L_{Ry}) \circ \eta^L_{ERy}\circ (\varepsilon^R_y)^{-1}\\
                &=\varepsilon^R_y \circ (\varepsilon^R_y)^{-1}\\
                &=\id_y,
    \end{align*}
    where the first equality is the definition of \(\Phi_y\), the second equality holds by functoriality of \(E\), the third equality holds by naturality of \(\eta^L\) (induced by the morphism \((\varepsilon^R_y)^{-1}\)), and the fourth equality follows from the triangle identity for the adjunction pair \((L,E)\). This shows that \(\mathscr{n}_y\) corresponds to \(\id_y\) under \(\Phi_y\), as claimed.

    \textit{Part 2.} From the first and last equality in the above calculation, we have
    \[
    \varepsilon^R_y \circ E\mathscr{n}_y \circ \eta^L_y=\id_y,
    \]
    for all objects \(y\) in \(\clY\). Rearranging yields \(E\mathscr{n}_y=(\varepsilon^R_y)^{-1} \circ (\eta^L_y)^{-1}\) which proves the claim.

    \textit{Part 3.} We verify that \(\eta^R_a\circ \varepsilon^L_a\) corresponds to the identity morphism \(\id_{Ea}\) under the diagonal natural isomorphism \(\clX(LEa,REa) \xrightarrow[]{\Phi_{Ea}} \clY(Ea,Ea)\). Indeed,
    \begin{align*}
        \Phi_{Ea}(\eta^R_a\circ \varepsilon^L_a)&=\varepsilon^R_{Ea} \circ E(\eta^R_a\circ \varepsilon^L_a) \circ \eta^L_{Ea}\\
                &=\varepsilon^R_{Ea} \circ E(\eta^R_a)\circ E(\varepsilon^L_a) \circ \eta^L_{Ea}\\
                &=\id_{Ea},
    \end{align*}
    where the first equality is the definition of \(\Phi_{Ea}\), the second equality holds by functoriality of \(E\), and the third equality follows from the triangle identity for the adjunction pairs \((L,E)\) and \((E,R)\). Thus \(\mathscr{n}_{Ea}=\eta^R_a\circ \varepsilon^L_a\).
\end{proof}
\end{lem}

We call an adjunction triple \((L,E,R)\) \emph{fully faithful} if either \(L\) or \(R\) is fully faithful. The natural transformation \(L\xrightarrow[]{\mathscr{n}} R\) constructed in Lemma~\ref{L: Norm_fully faith adjunction triple} is called the \emph{norm} of the fully faithful adjunction triple \((L,E,R)\) in \cite{franjou-comparison-2004}. We now use the norm to construct another functor $\clC \to \clA$ known as the \emph{intermediate extension functor}.

    Let \(\clY\) be a category. Its \emph{category of arrows} \(\Arr\clY\) has as objects the morphisms of \(\clY\) and as morphisms commutative squares in \(\clY\). Using the category of arrows, a natural transformation
    \[
    \xymatrix@R=2cm@C=1.5cm{
    \clX \rtwocell<5>^F_G{\alpha} & \clY
    }
    \]
    can be reinterpreted as a functor
    \[
    \clX\xrightarrow[]{\alpha} \Arr\clY,
    \]
    which sends an object \(x\) in \(\clX\) to the component \(\alpha_x\), and a morphism to the corresponding naturality square in \(\clY\). In an additive setting, the induced functor is additive.
    
    \begin{lem}\label{L: norm is additive}
    Let \(\clX\) and \(\clY\) be additive categories and let \(\clX\xrightarrow[]{F,G}\clY\) be two additive functors. If \(F\xrightarrow[]{\alpha}G\) is a natural transformation, then the induced functor
    \[
    \clX \xrightarrow[]{\alpha}\Arr\clY
    \]
    is additive.
    \begin{proof}
    The functor \(\alpha\) sends the zero object \(0\) of \(\clX\) to the morphism \(F0\xrightarrow[]{\alpha_0}G0\) in \(\clY\). Since \(F\) and \(G\) are additive, we have \(F0\cong G0\cong 0\), and hence \(\alpha_0\) is the zero object in \(\Arr\clY\).
    
    Let \(x\) and \(y\) be objects in \(\clX\). It now suffices to show that \(\alpha_{x\oplus y}\cong \alpha_x\oplus\alpha_y\) in \(\Arr\clY\). Choose a biproduct diagram
    \begin{equation}\label{Eqn: biprod F,G(x+y)}
    \begin{tikzcd}
        x \arrow[r, "\iota_x", shift left] & x\oplus y \arrow[l, "\pi_x", shift left] \arrow[r, "\pi_y", shift left] & y, \arrow[l, "\iota_y", shift left]
    \end{tikzcd}
    \end{equation}
    exhibiting \(x\oplus y\) as the direct sum of \(x\) and \(y\). Since \(F\) and \(G\) are additive, the diagrams
    \[
    \begin{tikzcd}
        Fx \arrow[r, "F(\iota_x)", shift left] & F(x\oplus y) \arrow[l, "F(\pi_x)", shift left] \arrow[r, "F(\pi_y)", shift left] & Fy \arrow[l, "F(\iota_y)", shift left] &\text{and} & Gx \arrow[r, "G(\iota_x)", shift left] & G(x\oplus y) \arrow[l, "G(\pi_x)", shift left] \arrow[r, "G(\pi_y)", shift left] & Gy \arrow[l, "G(\iota_y)", shift left]
        \end{tikzcd}
    \]
    exhibit \(F(x\oplus y)\) as the direct sum of \(Fx\) and \(Fy\), and \(G(x\oplus y)\) as the direct sum of \(Gx\) and \(Gy\), respectively.
    
    The biproduct diagram~\eqref{Eqn: biprod F,G(x+y)} induces the following eight naturality squares:
    \[
    \begin{tikzcd}[row sep=1.2cm]
        Fx \arrow[d, "F(\iota_x)"', shift right] \arrow[r, "\alpha_x"]                                                   & Gx \arrow[d, "G(\iota_x)"', shift right]                                             \\
        F(x\oplus y) \arrow[u, "F(\pi_x)"', shift right] \arrow[d, "F(\pi_y)"', shift right] \arrow[r, "{\alpha_{x\oplus y}}"] & G(x\oplus y) \arrow[u, "G(\pi_x)"', shift right] \arrow[d, "G(\pi_y)"', shift right] \\
        Fy \arrow[u, "F(\iota_y)"', shift right] \arrow[r, "\alpha_y"]                                                   & Gy. \arrow[u, "G(\iota_y)"', shift right]
    \end{tikzcd}
    \]
    It follows from the commutativity of these naturality squares that the above diagram is a biproduct diagram in \(\Arr\clY\), which exhibits \(\alpha_{x\oplus y}\) as the direct sum of \(\alpha_x\) and \(\alpha_y\).
    \end{proof}
    \end{lem}
    
    Let \(\clA\) be an abelian category. The assignment sending a morphism in \(\clA\) to its image augments to an additive functor
    \[
    \Arr\clA \xrightarrow[]{\Img} \clA
    \]
    (see \cite{maclane-categories-1998}*{VIII.1, Lem.\ 1 and VIII.3, Prop.\ 1}). In general, this functor is neither full nor faithful. However, the upcoming proposition shows that if we restrict to morphisms arising from the components of the norm of a fully faithful adjunction triple, the resulting restricted image functor is fully faithful. Before proceeding, a useful note...

    In what follows, our aim is to construct new codistinguished abelian subcategories from old ones by realising the new category as a full subcategory of the original ambient abelian category via a fully faithful adjunction triple. The next remark shows that, in this setting, it suffices to assume that the smaller category is additive---abelianness is then automatic.

\begin{rem}\label{R: localisation of ab abelian cat is abelian}
    Let \(\clA\) be an abelian category and let \(\clC\) be an additive category. Suppose we have a functor \(\clA \xrightarrow[]{E}\clC\) that preserves finite limits. If \(E\) admits either a fully faithful left adjoint or a fully faithful right adjoint, then \(\clC\) is automatically an abelian category (see \cite{borceux-handbook-2-1994}*{Prop.\ 1.4.3}).
\end{rem}

\begin{lem}\label{L: N Fact, F(pi), F(i) iso}
    Let \(\clA\) be an abelian category and \(\clC\) be an additive category. Let \((L,E,R)\) be a fully faithful adjunction triple
    \[
    \begin{tikzcd}[row sep=large, column sep = large]
    \clA \arrow[r, "E" description] & \clC, \arrow[l, "L"', bend right] \arrow[l, "R", bend left]
    \end{tikzcd}
    \]
    and let \(L\xrightarrow[]{\mathscr{n}}R\) be the natural transformation constructed in Lemma~\ref{L: Norm_fully faith adjunction triple}. Let $C$ denote the functor
    \[
    C \coloneqq \clC \xrightarrow[]{\mathscr{n}} \Arr\clA \xrightarrow[]{\Img} \clA.
    \]
    Then \(C\) induces a canonical factorisation
    \vspace{-2ex}
    \[
    \xymatrix@C+1.5em{
    \clC 
    \ruppertwocell<8>^L{\pi}
    \ar[r]|{C} 
    \rlowertwocell<-8>_R{\iota}
    &
    \clA
    &= 
    &\clC \rtwocell<5>^L_R{\hspace{0.46em}\mathscr{n}}
    &\clA,
    }
    \]
    \vspace{-3ex}\\
    of the norm \(\mathscr{n}\), with the following properties:
    \begin{itemize}
        \item[1.] The natural transformation \(L\xrightarrow[]{\pi}C\) is componentwise an epimorphism and the whiskered composite \(EL\xrightarrow[]{E\pi}EC\) is a natural isomorphism.
        \item[2.] The natural transformation \(C\xrightarrow[]{\iota}R\) is componentwise a monomorphism, and the whiskered composite \(EC\xrightarrow[]{E\iota}ER\) is a natural isomorphism.
    \end{itemize}
\begin{proof}
    We first note that by Remark~\ref{R: localisation of ab abelian cat is abelian}, the category \(\clC\) is an abelian category. We now describe the action of \(C\) on objects and morphisms.
        \begin{itemize}
            \item \textit{Objects:} For each object \(x\) in \(\clC\), assign the image \(Cx\), which comes equipped with the canonical factorisation
            \[
            Lx\xrightarrow[]{\pi_x} Cx \xrightarrow[]{\iota_x} Rx
            \]
            of \(\mathscr{n}_x\). Notice that \(\pi_x\) is an epimorphism and \(\iota_x\) is a monomorphism.
            \item \textit{Morphisms:} For each morphism \(x\xrightarrow[]{f}y\) in \(\clC\), assign the unique morphism
            \[
            Cx \xrightarrow[]{Cf} Cy
            \]
            making the following diagram commute:
            \begin{equation}\label{eqn: fact_of_norm}
            \begin{tikzcd}
            Lx \arrow[r, "\pi_x"] \arrow[d, "Lf"] &   Cx \arrow[r, "\iota_x"] \arrow[d, "Cf", dashed] & Rx \arrow[d, "Rf"] \\
            Ly \arrow[r, "\pi_y"]                 & Cy \arrow[r, "\iota_y"]                                   & Ry.
            \end{tikzcd}
            \end{equation}
        \end{itemize}
    
    By construction, \(\mathscr{n}\) factors as the vertical composite \(\iota\pi\). The commutative squares in diagram~\eqref{eqn: fact_of_norm} show that \(\{\pi_x\mid x\text{ in } \clC\}\) and \(\{\iota_x\mid x\text{ in } \clC\}\) are the components of natural transformations \(\pi\) and \(\iota\), respectively.

    \textit{Part~1.} Clearly, \(\pi\) is a componentwise epimorphism. As \(E\) is exact, the whiskered composite \(EL\xrightarrow[]{E\pi}EC\) is componentwise an epimorphism. To see that it is also componentwise a monomorphism, observe that by the interchange law, the whiskered composite \(E\mathscr{n}\) equals the vertical composite \(E\iota \, E\pi\). By Lemma~\ref{L: Norm_fully faith adjunction triple}, \(E\mathscr{n}\) is a natural isomorphism, which implies that \(E\pi\) is also a componentwise monomorphism, hence a natural isomorphism.

    \textit{Part~2.} This follows similarly to Part~1.
\end{proof}
\end{lem}

\begin{prop}\label{P: lift properties}
    Let \(\clA\) be an abelian category and \(\clC\) be an additive category. Let \((L,E,R)\) be a fully faithful adjunction triple
    \[
    \begin{tikzcd}[row sep=large, column sep = large]
    \clA \arrow[r, "E" description] & \clC, \arrow[l, "L"', bend right] \arrow[l, "R", bend left]
    \end{tikzcd}
    \]
    and let \(\id_\clC \xrightarrow[]{\eta^L}EL\) denote the unit of the adjunction \((L,E)\). Let \(L\xrightarrow[]{\mathscr{n}}R\) be the natural transformation constructed in Lemma~\ref{L: Norm_fully faith adjunction triple}. Let $C$ denote the functor
    \[
    C \coloneqq \clC \xrightarrow[]{\mathscr{n}} \Arr\clA \xrightarrow[]{\Img} \clA.
    \]
    Then \(C\) satisfies the following properties:
    \begin{itemize}
        \item[1.] The functor \(C\) is additive.
        \item[2.] The functor \(C\) is fully faithful.
        \item[3.] The composition \(\id_\clC \xrightarrow{\eta^L}EL \xrightarrow[]{E\pi}EC\), where \(L\xrightarrow[]{\pi}C\) is the natural transformation constructed in Lemma~\ref{L: N Fact, F(pi), F(i) iso}, is a natural isomorphism.
        \item[4.] If \(f\) is an epimorphism in \(\clC\), then \(Cf\) is an epimorphism in \(\clA\).
        \item[5.] If \(f\) is a monomorphism in \(\clC\), then \(Cf\) is a monomorphism in \(\clA\).
    \end{itemize}
    \begin{proof}
    We first note that by Remark~\ref{R: localisation of ab abelian cat is abelian}, the category \(\clC\) is an abelian category.
    
        \textit{Part 1.} This follows as \(C\) is a composition of additive functors (see Lemma~\ref{L: norm is additive}). Alternatively, additivity follows from Part~2 of this proposition since fully faithful functors between additive categories are automatically additive (see for example, \cite{kashiwara-categories-2006}*{Cor.\ 8.2.16}).
        
        \textit{Part 2.} See \cite{crawley-boevey-quiver-2017}*{Lem.\ 2.2}.

        \textit{Part~3.} This follows as the unit \(\eta^L\) of the adjunction pair \((L,E)\) is a natural isomorphism since \(L\) is fully faithful, and \(E\pi\) is a natural isomorphism by Lemma~\ref{L: N Fact, F(pi), F(i) iso}, Part~1.

        \textit{Parts 4 and 5.} See \cite{kuhn-generic-1994}*{Lem.\ 2.1, Part 4}.
    \end{proof}
\end{prop}

\subsection{Three codistinguished embeddings}

We now show that each of $L$, $R$ and the intermediate extension functor $C$ realises $\clC$ as a codistinguished abelian subcategory. 

\begin{setup}\label{Set: FF adj 3's}
    We fix an abelian category \(\clA\), an additive category \(\clC\) and a fully faithful adjunction triple \((L,E,R)\)
    \[
    \begin{tikzcd}[row sep=large, column sep = large]
    \clA \arrow[r, "E" description] & \clC. \arrow[l, "L"', bend right] \arrow[l, "R", bend left]
    \end{tikzcd}
    \]
    Let
    \[
    \id_{\clC} \xrightarrow{\eta^L} EL
    \quad\text{and}\quad
    LE \xrightarrow{\varepsilon^L} \id_{\clA}
    \]
    denote the unit and counit of the adjunction \((L,E)\) and let
    \[
    \id_{\clA} \xrightarrow{\eta^R} RE
    \quad\text{and}\quad
    ER \xrightarrow{\varepsilon^R} \id_{\clC}
    \]
    denote the unit and counit of the adjunction \((E,R)\). By Remark~\ref{R: localisation of ab abelian cat is abelian}, the category \(\clC\) is an abelian category. Let
    \[
    L\xrightarrow[]{\mathscr{n}}R
    \]
    be the norm natural transformation constructed in Lemma~\ref{L: Norm_fully faith adjunction triple}. Following \cite{kuhn-generic-1994}*{Sec.\ 4}, we define the fully faithful \emph{intermediate extension functor} associated to the adjunction triple \((L,E,R)\) to be the composite
    \[
    C\colon
    \clC \xrightarrow[]{\mathscr{n}} \Arr\clA \xrightarrow[]{\Img} \clA.
    \]
\end{setup}

\begin{prop}\label{P: LB,RB, LiftB extension closed in A}
    Consider Setup~\ref{Set: FF adj 3's}. Each of the essential images $L\clC$, $C\clC$ and $R\clC$ is extension closed in $\clA$.
\begin{proof}
    \textit{Part 1.} Let \(x\) and \(z\) be objects in \(\clC\) and suppose
    \[
    0\xrightarrow[]{} Lx \xrightarrow[]{} a \xrightarrow[]{} Lz \xrightarrow[]{} 0
    \]
    is a short exact sequence in \(\clA\). We must show that \(a\) lies in \(L\clC\).
    
    Using the exactness of \(E\), the right exactness of \(L\), and the naturality of the counit \(\varepsilon^L\), we obtain a commutative diagram in \(\clA\)
    \[
    \begin{tikzcd}
            & LELx \arrow[d, "\varepsilon^L_{Lx}"] \arrow[r] & LEa \arrow[d, "\varepsilon^L_a"] \arrow[r] & LELz \arrow[d, "\varepsilon^L_{Lz}"] \arrow[r] & 0 \\
    0 \arrow[r] & Lx \arrow[r]                                 & a \arrow[r]                              & Lz \arrow[r]                                 & 0
    \end{tikzcd}
    \]
    whose rows are exact. Since \(L\) is fully faithful, the counit \(\varepsilon^L_t\) is an isomorphism if and only if \(t\) lies in the essential image of \(L\) (see Lemma~\ref{L: EImg of FF adjoint}). Hence \(\varepsilon^L_{Lx}\) and \(\varepsilon^L_{Lz}\) are isomorphisms, and the Snake Lemma implies that \(\varepsilon^L_a\) is also an isomorphism. Thus Lemma~\ref{L: EImg of FF adjoint} ensures \(a\) is in the essential image of \(L\), as required.

    \textit{Part 2.} This follows similarly to Part 1.

    \textit{Part 3.} Let \(x\) and \(z\) be objects in \(\clC\) and suppose
    \begin{equation}\label{Eqn: Extension closure in Lift}
    0\xrightarrow[]{} Cx \xrightarrow[]{f} a \xrightarrow[]{g} Cz \xrightarrow[]{} 0
    \end{equation}
    is a short exact sequence in \(\clA\). We must show that \(a\) lies in \(C\clC\).
    
    By Lemma~\ref{L: Norm_fully faith adjunction triple}, Part~3, the component
    \(\mathscr{n}_{Ea}\colon LEa\to REa\) of the norm factors as
    \[
    LEa \xrightarrow[]{\varepsilon^L_a} a \xrightarrow[]{\eta^R_a} REa.
    \]
    We show that \(\varepsilon^L_a\) is an epimorphism and that
    \(\eta^R_a\) is a monomorphism. Hence this factorisation is the canonical
    image factorisation of \(\mathscr{n}_{Ea}\), and so \(a\) is exhibited as
    \(CEa\).

    Applying the exact functor \(E\) to \eqref{Eqn: Extension closure in Lift}, we obtain an exact sequence in \(\clC\)
    \begin{equation}\label{Eqn: 1st Extension closure in Lift}
    0\xrightarrow[]{} E(Cx) \xrightarrow[]{Ef} Ea \xrightarrow[]{Eg} E(Cz) \xrightarrow[]{} 0.
   \end{equation}
    By Proposition~\ref{P: lift properties}, Part 3, there is a natural isomorphism
    \[
    \alpha\coloneqq\id_\clC \xrightarrow{\eta^L}EL \xrightarrow[]{E\pi}EC.
    \]
    Using \(\alpha\), the exactness of \eqref{Eqn: 1st Extension closure in Lift} implies that
    \begin{equation}\label{Eqn: 2nd Extension closure in Lift}
    0\xrightarrow[]{} x \xrightarrow[]{E(f)\alpha_x} Ea \xrightarrow[]{\alpha_z^{-1}Eg} z \xrightarrow[]{} 0
    \end{equation}
    is exact in \(\clC\). Applying the right exact functor \(L\) and left exact functor \(R\) to \eqref{Eqn: 2nd Extension closure in Lift}, we consider the following diagram
    \begin{equation}\label{Eqn: Scary diagram}
    \begin{tikzcd}[column sep=large]
        & Lx \arrow[r, "L(E(f)\alpha_x)"] \arrow[d, "\pi_x"] & LEa \arrow[r, "L(\alpha^{-1}_zEg)"] \arrow[d, "\varepsilon^L_a"] & Lz \arrow[r] \arrow[d, "\pi_z"]               & 0 \\
        0 \arrow[r] & Cx \arrow[r, "f"] \arrow[d, "\iota_x"] & a \arrow[r, "g"] \arrow[d, "\eta^R_a"]                           & Cz \arrow[r] \arrow[d, "\iota_z"] & 0 \\
        0 \arrow[r] & Rx \arrow[r, "R(E(f)\alpha_x)"]                    & REa \arrow[r, "R(\alpha^{-1}_zEg)"]                              & Rz,                                        &  
    \end{tikzcd}
    \end{equation}
    whose rows are exact. Recall from Lemma~\ref{L: N Fact, F(pi), F(i) iso} that the components of \(L\xrightarrow[]{\pi}C\xrightarrow[]{\iota}R\) provide canonical image factorisations of the components of the norm \(L\xrightarrow[]{\mathscr{n}} R\).
    
    We claim that diagram~\eqref{Eqn: Scary diagram} commutes. Transposing diagram~\eqref{Eqn: Scary diagram} through the adjunctions \((L,E)\) and \((E,R)\), this is equivalent to the commutativity of the diagram
    \begin{equation}\label{Eqn: Adj of Scary diagram}
    \begin{tikzcd}[column sep=large, row sep=large]
        & x \arrow[r, "E(f)\alpha_x"] \arrow[d, "E(\pi_x)\eta^L_x"]                & Ea \arrow[r, "\alpha^{-1}_zEg"] \arrow[d, equal] & z \arrow[r] \arrow[d, "E(\pi_z)\eta^L_z"]                          & 0 \\
        0 \arrow[r] & E(Cx) \arrow[r, "Ef"] \arrow[d, "\varepsilon^R_xE(\iota_x)"] & Ea \arrow[r, "Eg"] \arrow[d, equal]              & E(Cz) \arrow[r] \arrow[d, "\varepsilon^R_zE(\iota_z)"] & 0 \\
        0 \arrow[r] & x \arrow[r, "E(f)\alpha_x"]                                              & Ea \arrow[r, "\alpha^{-1}_zEg"]                 & z .                                                                &  
    \end{tikzcd}
    \end{equation}
    The top two squares commute by definition of \(\alpha\). The bottom-right square commutes for similar reasons as the bottom-left. For the bottom left square, commutativity will occur provided that the composition
    \begin{equation}\label{eqn: be the identity}
    E(Cx)\xrightarrow[]{\varepsilon^R_xE(\iota_x)}x\xrightarrow[]{E(\pi_x)\eta^L_x}E(Cx)
    \end{equation}
    is the identity. Since \(Lx \xrightarrow[]{\pi_x}Cx\xrightarrow[]{\iota_x}Rx\) is the canonical image factorisation of \(\mathscr{n}_x\), transposing through the adjunctions shows that the reverse composition
    \[
    x \xrightarrow[]{E(\pi_x)\eta^L_x} E(Cx) \xrightarrow[]{\varepsilon^R_xE(\iota_x)} x
    \]
    is the identity (see Lemma~\ref{L: Norm_fully faith adjunction triple}, Part 1). This implies three things:
    \begin{itemize}
        \item The morphism \(E(\pi_x)\eta^L_x\) is invertible, since \(E\) is exact and \(\pi_x\) is an epimorphism.
        \item The morphism \(\varepsilon^R_xE(\iota_x)\) is invertible, since \(E\) is exact and \(\iota_x\) is a monomorphism.
        \item The previous composition~\eqref{eqn: be the identity} is an idempotent, which by the previous two points, is invertible.
    \end{itemize}
    But, the only invertible idempotent is the identity, so we are done.
    
    Finally, applying the Snake Lemma twice to diagram~\eqref{Eqn: Scary diagram}, we conclude that \(\varepsilon^L_a\) is an epimorphism and \(\eta^R_a\) is a monomorphism. Thus the factorisation \(\mathscr{n}_{Ea}=\eta^R_a\circ \varepsilon^L_a\) is indeed the canonical image factorisation, and therefore \(a\) lies in \(C\clC\).
\end{proof}
\end{prop}
    
A functor $F \colon \clC \to \clA$ between abelian categories \emph{reflects short exact sequences in $\clA$}, if for all short exact sequences
    \[
    0\xrightarrow[]{} Fx \xrightarrow[]{Ff} Fy\xrightarrow[]{Fg} Fz \xrightarrow[]{} 0
    \]
    in $\clA$, the sequence
    \[
    0\xrightarrow[]{} x \xrightarrow[]{f} y\xrightarrow[]{g} z \xrightarrow[]{} 0
    \]
    is a short exact sequence in $\clC$.
    The functors \(L,R\) and \(C\) in Setup~\ref{Set: FF adj 3's} are fully faithful functors that, while not necessarily exact, always reflect short exact sequences.

\begin{prop} \label{P: reflect ses}
    Consider Setup~\ref{Set: FF adj 3's}. Each of the functors $L$, $R$ and $C$ reflects short exact sequences. Moreover, the following hold:
    \begin{itemize}
        \item[1.] $L$ is exact if and only if it preserves monomorphisms.
        \item[2.] $R$ is exact if and only if it preserves epimorphisms.
        \item[3.] $C$ is exact if and only if for every short exact sequence
        \[
        0 \to x \xrightarrow[]{f} y \xrightarrow[]{g} z \to 0
        \]
        in $\clC$ either of the following holds:
        \begin{itemize}
            \item[(a)] For each object $a$ in $\clA$, the sequence of abelian groups
            \[
            \clA(a,Cx) \xrightarrow[]{(Cf)_*} \clA(a,Cy) \xrightarrow[]{(Cg)_*} \clA(a,Cz)
            \]
            is exact.
            \item[(b)] For each object $a$ in $\clA$, the sequence of abelian groups
            \[
            \clA(Cz,a) \xrightarrow[]{(Cg)_*} \clA(Cy,a) \xrightarrow[]{(Cf)_*} \clA(Cx,a)
            \]
            is exact.
        \end{itemize}
    \end{itemize}
\end{prop}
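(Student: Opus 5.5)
The plan is to reduce everything to the fact that, for each \(F\in\{L,R,C\}\), there is a natural isomorphism \(EF\cong\id_{\clC}\), together with the exactness of \(E\). Concretely, \(\eta^L\colon \id_\clC\to EL\) and \(\varepsilon^R\colon ER\to\id_\clC\) are natural isomorphisms because \(L\) and \(R\) are fully faithful. For \(C\), the required isomorphism is \(\alpha=E\pi\circ\eta^L\) from Proposition~\ref{P: lift properties}, Part~3.

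\textbf{Reflection of short exact sequences.} Let \(0\to Fx\xrightarrow{Ff}Fy\xrightarrow{Fg}Fz\to 0\) be short exact in \(\clA\). Applying the exact functor \(E\) (it has both adjoints) gives a short exact sequence \(0\to EFx\to EFy\to EFz\to0\) in \(\clC\). By naturality of the isomorphism \(EF\cong\id_\clC\), this sequence is isomorphic, as a diagram, to \(0\to x\xrightarrow{f}y\xrightarrow{g}z\to 0\). Hence the latter is short exact, since \(\clC\) is abelian by Remark~\ref{R: localisation of ab abelian cat is abelian}. This one argument covers \(L\), \(R\) and \(C\) simultaneously.

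\textbf{Parts 1 and 2.} \(L\) is a left adjoint, so it preserves cokernels and is right exact. A right exact functor between abelian categories is exact precisely when it preserves monomorphisms, since left exactness on a short exact sequence amounts to the first map remaining monic. Part~2 is the dual statement for the left exact right adjoint \(R\).

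\textbf{Part 3.} I read the condition as: \(C\) is exact if and only if (a) holds for every short exact sequence in \(\clC\), if and only if (b) holds for every short exact sequence. By Proposition~\ref{P: lift properties}, Parts~4 and~5, \(C\) sends \(f\) to a monomorphism and \(g\) to an epimorphism, and \(Cg\circ Cf=C(gf)=0\). Thus \(C\) applied to a short exact sequence is exact except possibly at \(Cy\), and exactness of \(C\) is equivalent to \(\Img(Cf)=\ker(Cg)\) for every short exact sequence.

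\emph{Exactness implies (a) and (b).} If \(C\) is exact, then (a) and (b) follow from left exactness of \(\clA(a,-)\) and of \(\clA(-,a)\).

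\emph{(a) implies exactness.} Take \(a=\ker(Cg)\) with its inclusion \(k\) into \(Cy\). Since \((Cg)_*(k)=0\), exactness of the sequence in (a) gives a morphism \(a\to Cx\) with \(Cf\circ(\text{it})=k\). Hence \(\ker(Cg)\subseteq\Img(Cf)\), and the reverse inclusion is automatic from \(Cg\circ Cf=0\).

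\emph{(b) implies exactness.} This is dual: take \(a=\coker(Cf)\) with its projection \(p\). Since \((Cf)^*(p)=0\), exactness in (b) shows that \(p\) factors through \(Cg\), which forces \(\ker(Cg)\subseteq\ker(p)=\Img(Cf)\).

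\textbf{Expected difficulty.} Nothing here is genuinely hard. The step needing most care is the reflection argument for \(C\). There one must use that the isomorphism \(\alpha\) is natural, so that the square relating \(ECf\) to \(f\) commutes, rather than just having objectwise isomorphisms. This is exactly what Proposition~\ref{P: lift properties}, Part~3, provides.
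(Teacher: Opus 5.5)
Your proposal is correct and follows the paper's proof. Reflection comes from applying the exact functor $E$ and using the natural isomorphisms $\eta^L$, $\varepsilon^R$ and $E\pi\circ\eta^L$ (Proposition~\ref{P: lift properties}, Part~3); Parts~1 and~2 come from right exactness of $L$ and left exactness of $R$; and Part~3 comes from mono/epi preservation by $C$ together with the Yoneda-type reflection of exactness at $Cy$. The only differences are presentational: you unpack the Yoneda step by testing against the kernel of $Cg$ and the cokernel of $Cf$, and you spell out the easy converse (exactness of $C$ gives (a) and (b) by left exactness of Hom), which the paper leaves implicit.
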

\begin{proof}
    Suppose there exists a short exact sequence
    \[
    0 \xrightarrow[]{} Lx \xrightarrow[]{Lf} Ly \xrightarrow[]{Lg} Lz \xrightarrow[]{} 0.
    \]
    Since \(E\) is an exact functor and the unit \(\id_\clC\xrightarrow[]{\eta^L}EL\) of the adjunction pair \((L,E)\) is a natural isomorphism, the diagram
    \[
    0 \xrightarrow[]{} x \xrightarrow[]{f} y \xrightarrow[]{g} z \xrightarrow[]{} 0
    \]
    is a short exact sequence in \(\clC\). The other two claims follow similarly since the counit \(ER \xrightarrow[]{\varepsilon^R} \id_{\clC}\) of the adjunction pair \((E,R)\) is a natural isomorphism and also $EC$ is naturally isomorphic to $\id_{\clC}$ by Proposition~\ref{P: lift properties}, Part~3.

    \textit{Part 1.} This follows as \(L\) is a right exact functor.

    \textit{Part 2.} This follows as \(R\) is a left exact functor.

    \textit{Part 3.} By assumption, the sequence
    \[
    \clA(a,Cx) \xrightarrow[]{(Cf)_*} \clA(a,Cy) \xrightarrow[]{(Cg)_*} \clA(a,Cz)
    \]
    is exact for each object \(a\) in \(\clA\). Since the Yoneda Embedding \(t\mapsto \clA(-,t)\) reflects exactness \cite{weibel-introduction-1994}*{Yoneda Lemma~1.6.11}, it follows that the diagram
    \[
    0\xrightarrow[]{}Cx \xrightarrow[]{Cf} Cy \xrightarrow[]{Cg} Cz \xrightarrow[]{}0
    \]
    in \(\clA\) is exact at \(Cy\). Exactness at \(Cx\) and \(Cz\) follows from Proposition~\ref{P: lift properties}, Part~4 and Part~5. The second equivalence follows similarly.
\end{proof}

We can also classify when the intermediate extension functor is exact in terms of closure properties of its essential image.
    
\begin{prop}\label{P: C exact iff CC is sub quot closed}
        Consider Setup~\ref{Set: FF adj 3's}. Then the following statements are equivalent.
        \begin{enumerate}
            \item The intermediate extension functor \(C\) is exact.
            \item The essential image \(C\clC\) is closed under subobjects in \(\clA\).
            \item The essential image \(C\clC\) is closed under quotients in \(\clA\).
        \end{enumerate}
    \begin{proof}
        \((1\implies 2 \text{ and } 3)\): Let
        \begin{equation}\label{Eqn: sub and quot}
        0 \xrightarrow[]{} a \xrightarrow[]{f} Cy \xrightarrow[]{g} c \xrightarrow[]{} 0
        \end{equation}
        be a short exact sequence in \(\clA\), where \(y\) is an object of \(\clC\). We show that both \(a\) and \(c\) lie in the essential image \(C\clC\).

        Since \(E\) is exact and \(C\) is exact by assumption, applying \(CE\) to \eqref{Eqn: sub and quot} gives a short exact sequence
        \[
        0 \xrightarrow[]{} CEa \xrightarrow[]{CEf} CECy \xrightarrow[]{CEg} CEc \xrightarrow[]{} 0.
        \]
        By Proposition~\ref{P: lift properties}, Part~3, there is a natural isomorphism
        \[
        \beta^{-1}\coloneqq\id_\clC \xrightarrow[]{\eta^L}EL \xrightarrow[]{E\pi}EC.
        \]
        We first show that the composition
        \begin{equation}\label{eqn: gonna be zero}
        CEa \xrightarrow[]{CEf} CECy \xrightarrow[]{C\beta_y} Cy \xrightarrow[]{g} c 
        \end{equation}
        is zero.

        Using the right exactness of \(L\), the exactness of \(E\), and the naturality of the counit \(LE\xrightarrow[]{\varepsilon^L}\id_{\clA}\), we obtain a commutative diagram in \(\clA\)
        \[
        \begin{tikzcd}
            & LEa \arrow[r, "LEf"] \arrow[d, "\varepsilon^L_a"] & LECy \arrow[r, "LEg"] \arrow[d, "\varepsilon^L_{Cy}"] & LEc \arrow[r] \arrow[d, "\varepsilon^L_c"] & 0 \\
        0 \arrow[r] & a \arrow[r, "f"]                                  & Cy \arrow[r, "g"]                                     & c \arrow[r]                                & 0.
        \end{tikzcd}
        \]
        In particular, the composition
        \begin{equation}\label{eqn: is zero}
        LEa\xrightarrow[]{LEf} LECy \xrightarrow[]{\varepsilon^L_{Cy}} Cy \xrightarrow[]{g} c
        \end{equation}
        is zero.

        The composition \(Ea \xrightarrow[]{Ef} ECy \xrightarrow[]{\beta_y} y\) induces, by the naturality of \(\pi\), the naturality squares
        \begin{equation}\label{eqn: two naturality squares}
        \begin{tikzcd}
        LEa \arrow[d, "LEf"] \arrow[r, "\pi_{Ea}"]        & CEa \arrow[d, "CEf"]       \\
        LECy \arrow[d, "L\beta_y"] \arrow[r, "\pi_{ECy}"] & CECy \arrow[d, "C\beta_y"] \\
        Ly \arrow[r, "\pi_y"]                             & Cy.                     
        \end{tikzcd}
        \end{equation}
        We claim that
        \[
        \pi_yL(\beta_y)=\varepsilon^L_{Cy}.
        \]
        Indeed, the image of \(\pi_yL(\beta_y)\) under the natural isomorphisms
        \[
        \clA(LECy,Cy) \xrightarrow[]{E} \clC(ELECy,ECy) \xrightarrow[]{(\eta^L_{ECy})^*} \clC(ECy,ECy)
        \]
        is \(E(\pi_y)EL(\beta_y)\eta^L_{ECy}\), and we have
        \begin{align*}
        E(\pi_y)EL(\beta_y)\eta^L_{ECy}= E(\pi_y)\eta^L_y\beta_y = \beta_y^{-1}\beta_y = \id_{ECy},
        \end{align*}
        where the first equality follows from the naturality of \(\eta^L\), and the second equality follows from the definition of \(\beta\). This proves the claim.

        Using the claim, the vanishing of \eqref{eqn: is zero}, and the commutativity of \eqref{eqn: two naturality squares}, we obtain \(gC(\beta_y)CE(f)\pi_{Ea}=0\). By Lemma~\ref{L: N Fact, F(pi), F(i) iso}, Part~1, the morphism \(\pi_{Ea}\) is an epimorphism. Hence
        \[
        gC(\beta_y)CE(f)=0.
        \]
        Therefore, by the universal properties of the kernel and cokernel, there are unique morphisms \(\varphi^L_a\) and \(\varphi^L_c\) rendering the following diagram commutative:
        \[
        \begin{tikzcd}
        0 \arrow[r] & CEa \arrow[r, "CEf"] \arrow[d, "\varphi^L_a", dashed] & CECy \arrow[r, "CEg"] \arrow[d, "C\beta_y"] & CEc \arrow[r] \arrow[d, "\varphi^L_c", dashed] & 0 \\
        0 \arrow[r] & a \arrow[r, "f"]                                    & Cy \arrow[r, "g"]                         & c \arrow[r]                                    & 0.
        \end{tikzcd}
        \]
        Commutativity implies that \(\varphi^L_a\) is a monomorphism and \(\varphi^L_c\) is an epimorphism. Symmetrically, using \(R\) instead of \(L\), we obtain a commutative diagram
        \[
        \begin{tikzcd}
        0 \arrow[r] & a \arrow[r, "f"] \arrow[d, "\varphi^R_a", dashed] & Cy \arrow[r, "g"] \arrow[d, "C(\beta_y^{-1})"] & c \arrow[r] \arrow[d, "\varphi^R_c", dashed] & 0 \\
        0 \arrow[r] & CEa \arrow[r, "CEf"]                              & CECy \arrow[r, "CEg"]                          & CEc \arrow[r]                                & 0,
        \end{tikzcd}
        \]
        where \(\varphi^R_a\) is a monomorphism and \(\varphi^R_c\) is an epimorphism.

        Commutativity of the two diagrams gives
        \[
        f=f\varphi^L_a\varphi^R_a
        \qquad\text{and}\qquad
        g=\varphi^L_c\varphi^R_cg.
        \]
        Since \(f\) is a monomorphism and \(g\) is an epimorphism, it follows that
        \[
        \varphi^L_a\varphi^R_a=\id_a
        \qquad\text{and}\qquad
        \varphi^L_c\varphi^R_c=\id_c.
        \]
        Hence \(\varphi^L_a\) and \(\varphi^R_c\) are isomorphisms. Therefore \(a\cong CEa\) and \(c\cong CEc\). Thus both \(a\) and \(c\) lie in the essential image \(C\clC\), as required.

        \((2\implies 1)\): Let
        \[
        0 \xrightarrow[]{} x \xrightarrow[]{f} y \xrightarrow[]{g} z \xrightarrow[]{}0
        \]
        be a short exact sequence in \(\clC\). We show that the sequence
        \begin{equation}\label{eqn: C(exact sequence)}
        0 \xrightarrow[]{} Cx \xrightarrow[]{Cf} Cy \xrightarrow[]{Cg} Cz \xrightarrow[]{} 0
        \end{equation}
        is exact in \(\clA\).

        By Proposition~\ref{P: lift properties}, Part~4, the morphism \(Cg\) is an epimorphism. Hence we have a short exact sequence
        \[
        0 \xrightarrow[]{} \ker(Cg) \xrightarrow[]{} Cy \xrightarrow[]{Cg} Cz \xrightarrow[]{} 0.
        \]
        Since the essential image \(C\clC\) is closed under subobjects, there is an object \(x'\) of \(\clC\) and an isomorphism \(\ker(Cg)\cong Cx'\). Since \(C\) is fully faithful by Proposition~\ref{P: lift properties}, Part~2, the above short exact sequence is isomorphic to a short exact sequence of the form
        \[
        0 \xrightarrow[]{} Cx' \xrightarrow[]{C(f')} Cy \xrightarrow[]{Cg} Cz \xrightarrow[]{} 0,
        \]
        for a unique morphism \(x' \xrightarrow[]{f'}y\). The intermediate extension functor \(C\) reflects short exact sequences by Proposition~\ref{P: reflect ses}. Hence
        \[
        0 \xrightarrow[]{} x' \xrightarrow[]{f'} y \xrightarrow[]{g} z \xrightarrow[]{} 0
        \]
        is exact in \(\clC\).

        Now \(x \xrightarrow[]{f} y\) and \(x' \xrightarrow[]{f'} y\) are both kernels of \(g\). Therefore there is a unique isomorphism \(x \xrightarrow[]{u} x'\) rendering the following diagram commutative:
        \[
        \begin{tikzcd}
        0 \arrow[r] & x \arrow[r, "f"] \arrow[d, "u"] & y \arrow[r, "g"] \arrow[d, equal] & z \arrow[r] \arrow[d, equal] & 0 \\
        0 \arrow[r] & x' \arrow[r, "f'"]                       & y \arrow[r, "g"]                  & z \arrow[r]                  & 0.
        \end{tikzcd}
        \]
        Applying \(C\) to this diagram shows that \eqref{eqn: C(exact sequence)} is exact.

        \((3\implies 1)\): This follows by a similar argument to the previous implication.
    \end{proof}
\end{prop}

    \begin{cor}\label{C: C exact iff C(C) is Serre sub}
    Consider Setup~\ref{Set: FF adj 3's}. The intermediate extension functor \(C\) is exact if and only if its essential image \(C\clC\) is a Serre subcategory of \(\clA\).
    \begin{proof}
        This follows from Proposition~\ref{P: LB,RB, LiftB extension closed in A}, Part 3 and Proposition~\ref{P: C exact iff CC is sub quot closed}.
    \end{proof}
    \end{cor}

    The preceding results yield three new codistinguished embeddings from a given one, together with criteria for determining when they are proper.

\begin{thm}\label{T: three codistinguished embeddings}
    Consider Setup~\ref{Set: FF adj 3's}. Let $\clA$ be a codistinguished subcategory of $\clT$. Then each of the three functors $L$, $R$ and $C$ realises $\clC$ as a codistinguished abelian subcategory of $\clT$. Moreover, if $\clA$ is extension closed in $\clT$, then the essential images $L\clC$, $R\clC$ and $C\clC$ are also extension closed in $\clT$.
\end{thm}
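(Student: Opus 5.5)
Let \(F\) denote any of \(L\), \(R\), \(C\). By Setup~\ref{Set: FF adj 3's}, \(\clC\) is abelian, and by Proposition~\ref{P: lift properties} (together with the full faithfulness of \(L\) and \(R\)), \(F\) is fully faithful. So \(F\clC\) is an additive subcategory of \(\clA\), hence of \(\clT\), and it is equivalent to \(\clC\); in particular it is abelian. The proof then splits into two independent checks: the codistinguished condition of Definition~\ref{D: codis}, and extension closure.

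\textbf{Codistinguished.} Take a triangle
\[
Fx \xrightarrow{u} Fy \xrightarrow{v} Fz \xrightarrow{} \Sigma Fx
\]
in \(\clT\). Since \(F\clC\subseteq\clA\) and \(\clA\) is codistinguished, the sequence
\[
0\to Fx\xrightarrow{u} Fy\xrightarrow{v} Fz\to 0
\]
is short exact in \(\clA\). Because \(F\) is full, we can write \(u=Ff\) and \(v=Fg\) for morphisms \(f,g\) in \(\clC\). By Proposition~\ref{P: reflect ses}, \(F\) reflects short exact sequences, so \(0\to x\xrightarrow{f}y\xrightarrow{g}z\to 0\) is short exact in \(\clC\).

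It remains to see that the sequence is short exact \emph{in the subcategory} \(F\clC\). The equivalence \(\clC\simeq F\clC\) induced by \(F\) transports kernels and cokernels. Hence \(Ff\) is a kernel of \(Fg\), and \(Fg\) is a cokernel of \(Ff\), computed within \(F\clC\). This is exactly the statement that the sequence is short exact in \(F\clC\).

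This is the one point I expect to need care: the (co)kernels in \(F\clC\) need not agree with those in \(\clA\), since \(F\) need not be exact. Fortunately we only need to transfer exactness from \(\clA\) to \(\clC\) (the reflection property) and then from \(\clC\) to its equivalent copy \(F\clC\). No comparison of (co)kernels inside \(\clA\) is required.

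\textbf{Extension closure.} Suppose \(\clA\) is extension closed in \(\clT\), and take a triangle
\[
Fx\to t\to Fz\to \Sigma Fx
\]
in \(\clT\). Then \(t\) lies in \(\clA\), so all three terms of the triangle lie in \(\clA\). Since \(\clA\) is codistinguished, \(0\to Fx\to t\to Fz\to0\) is short exact in \(\clA\). By Proposition~\ref{P: LB,RB, LiftB extension closed in A}, \(F\clC\) is extension closed in \(\clA\), so \(t\) lies in \(F\clC\). This proves that \(L\clC\), \(R\clC\) and \(C\clC\) are extension closed in \(\clT\).
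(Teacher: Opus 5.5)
Your proof is correct and is essentially the argument the paper intends: the paper presents the theorem as a direct consequence of full faithfulness (of \(L\), \(R\) by assumption, and of \(C\) by Proposition~\ref{P: lift properties}), reflection of short exact sequences (Proposition~\ref{P: reflect ses}), and extension closure in \(\clA\) (Proposition~\ref{P: LB,RB, LiftB extension closed in A}), combined exactly as you do, with the codistinguished property of \(\clA\) supplying the short exact sequence in the extension-closure step. The point you flag as delicate is easier than your route suggests: a kernel--cokernel pair in \(\clA\) whose three terms lie in the full subcategory \(F\clC\) is automatically a kernel--cokernel pair in \(F\clC\), because the universal properties simply restrict, so the reflection step is not strictly needed.
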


\begin{cor}\label{C: three proper embeddings}
    Consider Setup~\ref{Set: FF adj 3's}. Let \(\clA\) be a proper abelian subcategory of a triangulated category \(\clT\). Then each of the following holds:
    \begin{itemize}
        \item[1.] $L$ realises $\clC$ as a proper abelian subcategory of $\clT$ if and only if $L$ preserves monomorphisms.
        \item[2.] $R$ realises $\clC$ as a proper abelian subcategory of $\clT$ if and only if $R$ preserves epimorphisms.
        \item[3.] $C$ realises $\clC$ as a proper abelian subcategory of $\clT$ if and only if the essential image $C\clC$ is a Serre subcategory of $\clA$.
    \end{itemize}
\end{cor}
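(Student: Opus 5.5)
The statement just above is Corollary~\ref{C: three proper embeddings}. I would reduce it to Theorem~\ref{T: three codistinguished embeddings}, using that proper means distinguished plus codistinguished. Write \(F\) for any of \(L\), \(R\), \(C\). Since \(\clA\) is proper, it is in particular codistinguished. So Theorem~\ref{T: three codistinguished embeddings} shows that \(F\clC\) is a codistinguished abelian subcategory of \(\clT\). Hence \(F\clC\) is proper if and only if it is distinguished. The plan is to show that \(F\clC\) is distinguished exactly when \(F\) is exact, and then invoke the exactness criteria already proved.

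\emph{Step 1 (exact implies distinguished).} Suppose \(F\) is exact and take a short exact sequence \(0\to x\to y\to z\to 0\) in \(\clC\). Its image \(0\to Fx\to Fy\to Fz\to 0\) is short exact in \(\clA\). Since \(\clA\) is distinguished, this image extends to a triangle \(Fx\to Fy\to Fz\to\Sigma Fx\) in \(\clT\). Every short exact sequence in the essential image \(F\clC\) is isomorphic to one of this form, because \(F\) is fully faithful and reflects short exact sequences (Proposition~\ref{P: reflect ses}). Triangles are closed under isomorphism, so \(F\clC\) is distinguished.

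\emph{Step 2 (distinguished implies exact).} Suppose \(F\clC\) is distinguished and take a short exact sequence in \(\clC\) as above. Up to the isomorphism with the subcategory \(F\clC\) (again using full faithfulness), it gives a triangle \(Fx\to Fy\to Fz\to\Sigma Fx\) in \(\clT\) whose first three terms lie in \(\clA\). Since \(\clA\) is codistinguished, \(0\to Fx\to Fy\to Fz\to 0\) is short exact in \(\clA\). Hence \(F\) is exact.

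\emph{Step 3 (translate exactness).} Now apply the earlier criteria. For \(L\), Proposition~\ref{P: reflect ses} Part~1 says \(L\) is exact if and only if it preserves monomorphisms. For \(R\), Part~2 says \(R\) is exact if and only if it preserves epimorphisms. For \(C\), Corollary~\ref{C: C exact iff C(C) is Serre sub} says \(C\) is exact if and only if \(C\clC\) is a Serre subcategory of \(\clA\).

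The main subtlety is in Step~2. A short exact sequence in \(\clC\) must be matched with one in \(F\clC\) viewed inside \(\clA\), and the distinguished hypothesis on \(F\clC\) must be used in the abelian structure of \(F\clC\) itself, not that of \(\clA\). Short exact sequences of \(F\clC\) correspond to those of \(\clC\) via the equivalence \(\clC\simeq F\clC\). So a short exact sequence in \(\clC\) transports to one in the abelian category \(F\clC\), distinguishedness gives the triangle, and codistinguishedness of \(\clA\) then yields exactness in \(\clA\). I would state this identification explicitly.
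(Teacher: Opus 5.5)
Your proposal is correct and follows the route the paper leaves implicit (the corollary is stated without proof): proper means distinguished plus codistinguished, Theorem~\ref{T: three codistinguished embeddings} gives codistinguishedness of the essential image, distinguishedness of the essential image is equivalent to exactness of the functor (using both halves of properness of \(\clA\)), and Proposition~\ref{P: reflect ses} and Corollary~\ref{C: C exact iff C(C) is Serre sub} translate exactness into the three stated conditions. In Step~1, the equivalence \(\clC\simeq F\clC\) by itself already identifies the short exact sequences of \(F\clC\) with images of short exact sequences in \(\clC\), so the appeal to reflection of short exact sequences there is harmless but not needed.
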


\section{Idempotent subalgebras}\label{S: Idempotent subalgebras}

We illustrate the construction of Section~\ref{S: Codistinguished from FF} using idempotent subalgebras. This produces a large class of examples motivated by recollements of module categories: indeed, every recollement whose middle term is the module category of a semiprimary ring is equivalent to one induced by an idempotent element; see \cite{psaroudakis-recollements-2014}*{Cor.\ 5.5}. We then treat Auslander algebras and their associated finite representation type algebras in more detail.
    
    Let \(\Gamma\) be a ring and let \(e\) be an idempotent element in \(\Gamma\). Then \(e\) induces a fully faithful adjunction triple
        \begin{equation}\label{diag: idem recollements}
        \begin{tikzcd}
            \mod{\Gamma} \arrow[rr, "-\otimes_{\Gamma}\Gamma e" description] 
            &  & 
            \mod{e\Gamma e} \arrow[ll, "-\otimes_{e\Gamma e} e\Gamma"', bend right] 
            \arrow[ll, "{\Hom_{e\Gamma e}(\Gamma e, -)}", bend left]
        \end{tikzcd}
        \end{equation}
    where the functor \(-\otimes_{\Gamma}\Gamma e\) is naturally isomorphic to both \(\Hom_{\Gamma}(e\Gamma,-)\) and \((-)e\). This adjunction triple appears on the right-hand side of the recollement associated to \(e\).

    \begin{prop}\label{P: Norm Idemp}
    Let \(T\) be an \(e\Gamma e\)-module. Then the \(\Gamma\)-module homomorphism
        \begin{align*}
            T\otimes_{e\Gamma e}e\Gamma &\xrightarrow[]{\mathscr{n}_T} \Hom_{e\Gamma e}(\Gamma e,T),\\
            t\otimes er &\mapsto [se\mapsto terse],
        \end{align*}
        is the component at \(T\) of the norm constructed in Lemma~\ref{L: Norm_fully faith adjunction triple} associated to the fully faithful adjunction triple \eqref{diag: idem recollements}.
    \begin{proof}
        By Lemma~\ref{L: Norm_fully faith adjunction triple}, Part~1, it suffices to show that \(\mathscr{n}_T\) corresponds to \(\id_T\) under the diagonal natural isomorphism
        \[
        \Hom_\Gamma(T\otimes_{e\Gamma e}e\Gamma,\Hom_{e\Gamma e}(\Gamma e,T))
        \xrightarrow[]{\Phi_T}
        \Hom_{e\Gamma e}(T,T).
        \]
        By definition, \(\Phi_T^{-1}\) is the composition of the inverse of the unit of the adjunction
        \[
        (-\otimes_{e\Gamma e}e\Gamma,-\otimes_\Gamma\Gamma e),
        \]
        followed by the Tensor-Hom adjunction isomorphism associated to
        \[
        (-\otimes_\Gamma\Gamma e,\Hom_{e\Gamma e}(\Gamma e,-)).
        \]
        Thus we consider the composition of natural isomorphisms
        \begin{align*}
        \Hom_{e\Gamma e}(T,T)
            &\xrightarrow[]{}
            \Hom_{e\Gamma e}(T\otimes_{e\Gamma e}e\Gamma e,T)  \\
            &\xrightarrow[]{}
            \Hom_{e\Gamma e}(T\otimes_{e\Gamma e} e\Gamma\otimes_\Gamma \Gamma e,T) \\
            &\xrightarrow[]{}
            \Hom_{\Gamma}(T\otimes_{e\Gamma e}e\Gamma,\Hom_{e\Gamma e}(\Gamma e,T)).
        \end{align*}
        The first isomorphism is induced by precomposition with the unitor
        \[
        T\otimes_{e\Gamma e}e\Gamma e\xrightarrow[]{}T.
        \]
        The second isomorphism is induced by precomposition with the \(e\Gamma e\)-module isomorphism
        \begin{align*}
        T\otimes_{e\Gamma e} e\Gamma \otimes_\Gamma \Gamma e &\xrightarrow{} T\otimes_{e\Gamma e} e\Gamma e,\\
        t\otimes er \otimes se &\mapsto t\otimes erse.
        \end{align*}
        The composition of these two \(e\Gamma e\)-module isomorphisms is precisely the inverse of the unit of the adjunction \((-\otimes_{e\Gamma e}e\Gamma,-\otimes_\Gamma\Gamma e)\). The final isomorphism is the Tensor-Hom adjunction isomorphism. Under this composition, the identity \(\id_T\) corresponds exactly to the \(\Gamma\)-module homomorphism
        \begin{align*}
            T\otimes_{e\Gamma e}e\Gamma &\xrightarrow[]{\mathscr{n}_T} \Hom_{e\Gamma e}(\Gamma e,T),\\
            t\otimes er &\mapsto [se\mapsto terse],
        \end{align*}
        as required.
    \end{proof}
    \end{prop}

    We now begin our treatment of Auslander algebras and their associated finite representation type algebras.

    \begin{setup}\label{Setup: Auslander-Idempotent}
        Let \(A\) be a finite dimensional algebra over a field \(k\), and suppose that \(A\) is representation finite. Let \(M=A\oplus \xoverline{M}\) be a basic additive generator of \(\mod{A}\), and set
        \[
        \Gamma=\End_A(M),
        \]
        the Auslander algebra of \(A\). Let \(e\) in \(\Gamma\) be the idempotent
        \[
        e\colon M\xrightarrow[]{\pi_A}A \xrightarrow[]{\iota_A}M,
        \]
        where \(\pi_A\) denotes the projection onto \(A\) and \(\iota_A\) denotes the inclusion of \(A\). Then there are isomorphisms
        \[
        e\Gamma e\cong A, 
        \qquad 
        e\Gamma \cong \Hom_A(M,A)
        \qquad\text{and}\qquad 
        \Gamma e\cong M,
        \]
        of algebras, \((A,\Gamma)\)-bimodules and \((\Gamma,A)\)-bimodules, respectively.

        With these identifications, the fully faithful adjunction triple \eqref{diag: idem recollements} becomes
        \begin{equation}\label{diag: FF Auslander}
        \begin{tikzcd}[column sep=large]
            \mod\Gamma \arrow[rr, "(-)e" description] 
            &  & 
            {\mod{A}.} 
            \arrow[ll, "L\coloneqq{-\otimes_{A}\Hom_A(M,A)}"', bend right] 
            \arrow[ll, "{R\coloneqq\Hom_A(M,-)}", bend left]
        \end{tikzcd}
        \end{equation}
        Using Proposition~\ref{P: Norm Idemp}, the component of the associated norm is the Tensor-Evaluation map
        \begin{align*}
        T\otimes_A\Hom_A(M,A) &\xrightarrow[]{\mathscr{n}_T} \Hom_A(M,T),\\
        t\otimes f &\mapsto [m\mapsto tf(m)].
        \end{align*}
        
        We have the intermediate extension functor
        \[
        \mod{A} \xrightarrow[]{C} \mod\Gamma.
        \]
    \end{setup}

    \begin{rem}\label{R: Auslander algebra revisited}
        Consider \(\mod\Gamma\) as the heart of the canonical \(t\)-structure on \(\bDer(\mod\Gamma)\). Theorem~\ref{T: three codistinguished embeddings} and diagram~\eqref{diag: FF Auslander} show that \(\mod{A}\) can be realised, in three possibly distinct ways, as a codistinguished abelian subcategory of \(\bDer(\mod\Gamma)\). Moreover, since \(A\) is representation finite, the functor \(R\) induces an equivalence
        \[
        \add(M)=\mod{A}\xrightarrow[]{\simeq}\proj\Gamma,
        \]
        for \(M\) a basic additive generator of \(\mod{A}\). Therefore, Example~\ref{E: Auslander algebras2} is a special case of Theorem~\ref{T: three codistinguished embeddings}.
    \end{rem}

    Unlike \(L\) and \(R\), the intermediate extension functor \(C\) is not given explicitly from the outset. We therefore collect some lemmas that make it easier to work with. The first gives control over its action on projective \(A\)-modules.
        
    \begin{lem}\label{L: Idem Lift-Proj}
        Consider Setup~\ref{Setup: Auslander-Idempotent}. Let \(P\) be an \(A\)-module and let
        \[
        LP \xrightarrow[]{\mathscr{n}_P} RP
        \]
        be the component of the norm associated to the fully faithful adjunction triple \eqref{diag: FF Auslander}. If \(P\) is finitely generated and projective, then the following statements hold:
        \begin{enumerate}
            \item \(\mathscr{n}_P\) is an isomorphism.
            \item The components
                \[
                LP\xrightarrow[]{\pi_P} CP 
                \qquad \text{and} \qquad 
                CP \xrightarrow[]{\iota_P}RP
                \]
                of the canonical natural transformations obtained in Lemma~\ref{L: N Fact, F(pi), F(i) iso} are isomorphisms.
        \end{enumerate}
        \begin{proof}
        \textit{Part 1.} See, for example, \cite{christensen-derived-2024}*{Prop.\ 1.4.6(a)}.

        \textit{Part 2.} This follows from Part~1 and Lemma~\ref{L: N Fact, F(pi), F(i) iso}.
        \end{proof}
        \end{lem}
        
        The next lemma gives an explicit description of the intermediate extension functor.
        
        \begin{lem}\label{L: Idemp Lift explicit}
        Consider Setup~\ref{Setup: Auslander-Idempotent}. Let \(T\) be an \(A\)-module and let
            \[
            LT \xrightarrow[]{\mathscr{n}_T} RT
            \]
            be the component of the norm associated to the fully faithful adjunction triple \eqref{diag: FF Auslander}. Then the \(\Gamma\)-module \(CT\) identifies with
            \[
            CT
            =
            \{ f \in \Hom_A(M,T) \mid f \text{ factors through a finitely generated free } A\text{-module}\}.
            \]
        \begin{proof}
        By definition, \(CT\) is the image of the norm map
        \[
        LT=T\otimes_A\Hom_A(M,A)
        \xrightarrow[]{\mathscr{n}_T}
        \Hom_A(M,T)=RT,
        \]
        which we regard as a submodule of \(\Hom_A(M,T)\). Let \(t\otimes f\) be an elementary tensor in \(T\otimes_A\Hom_A(M,A)\). Then \(\mathscr{n}_T(t\otimes f)\) factors as
        \[
        M \xrightarrow[]{f} A \xrightarrow[]{\lambda_t} T,
        \]
        where \(\lambda_t\) is the \(A\)-module homomorphism determined by \(1_A\mapsto t\). Thus the image of every elementary tensor factors through the finitely generated free module \(A\).
        
        Now let \(x=\sum_{i=1}^n t_i\otimes f_i\) be an arbitrary element of \(T\otimes_A\Hom_A(M,A)\). Then
        \[
        \mathscr{n}_T(x)=\sum_{i=1}^n \mathscr{n}_T(t_i\otimes f_i),
        \]
        and this morphism factors as
        \[
        M \xrightarrow[]{\begin{pmatrix}
        f_1\\
        \vdots\\
        f_n
        \end{pmatrix}} A^n
        \xrightarrow[]{\begin{pmatrix}
        \lambda_{t_1} & \cdots & \lambda_{t_n}
        \end{pmatrix}} T.
        \]
        Hence every element of \(CT=\img(\mathscr{n}_T)\) factors through a finitely generated free \(A\)-module.
                
        Conversely, suppose that \(M\xrightarrow[]{h}T\) factors through a finitely generated free \(A\)-module. Thus we may write \(h\) as a composite
        \[
        M \xrightarrow[]{\begin{pmatrix}
        f_1\\
        \vdots\\
        f_n
        \end{pmatrix}} A^n
        \xrightarrow[]{\begin{pmatrix}
        \lambda_{t_1} & \cdots & \lambda_{t_n}
        \end{pmatrix}} T,
        \]
        for suitable \(A\)-module homomorphisms \(M\xrightarrow[]{f_i} A\). With this, we get
        \[
        h=\mathscr{n}_T\Big(\sum_{i=1}^n t_i\otimes f_i\Big). \qedhere
        \]
        \end{proof}
        \end{lem}

        We now analyse when the functors \(L\), \(R\), and \(C\) are exact. This determines when the associated embeddings from Corollary~\ref{C: three proper embeddings} are proper, rather than merely codistinguished.

        \begin{prop}\label{P: Idem R exact}
            Consider Setup~\ref{Setup: Auslander-Idempotent}. The following statements are equivalent:
            \begin{enumerate}
            \item \(R\) is exact.
            \item \(\gdim{A}=0\).
            \item The canonical natural transformations
                \[
                L\xrightarrow[]{\pi}C
                \qquad\text{and}\qquad
                C\xrightarrow[]{\iota}R
                \]
                obtained in Lemma~\ref{L: N Fact, F(pi), F(i) iso} are natural isomorphisms.
            \end{enumerate}
        \begin{proof}
            \((1)\Leftrightarrow{}(2)\): The functor \(R=\Hom_A(M,-)\) is exact if and only if \(M\) is a projective \(A\)-module. Since \(M\) is an additive generator of \(\mod A\), this holds if and only if every finitely generated \(A\)-module is projective, equivalently if and only if \(A\) is semisimple. This is equivalent to \(\gdim{A}=0\).
            
            \((2)\xRightarrow{}(3)\): If \(\gdim{A}=0\), then every finitely generated \(A\)-module is projective. The statement therefore follows from Lemma~\ref{L: Idem Lift-Proj}.

            \((3)\Rightarrow(1)\): The functor \(R\) is left exact. By assumption, \(R\) is naturally isomorphic to \(L\), and \(L\) is right exact. Hence \(R\) is also right exact and therefore is exact.
        \end{proof}
        \end{prop}

        The converse of Part~2 of the following result will be obtained in Corollary~\ref{C: L lift iso iff hereditary}, after establishing the exactness criterion for \(C\).

        \begin{prop}\label{P: Idem L exact}
        Consider Setup~\ref{Setup: Auslander-Idempotent}. The following statements hold:
            \begin{enumerate}
            \item \(L\) is exact if and only if \(\gdim{A}\leq2\).
            
            \item If \(\gdim{A}\leq1\), then the canonical natural transformation
                \[
                L\xrightarrow[]{\pi}C
                \]
                obtained in Lemma~\ref{L: N Fact, F(pi), F(i) iso} is a natural isomorphism.
            \end{enumerate}
        \begin{proof}
        \textit{Part 1.} Recall that \(M=A\oplus \xoverline{M}\) is a basic additive generator of \(\mod{A}\). The functor \(L\) is exact if and only if \(\Hom_A(M,A)\) is flat as an \(A^{op}\)-module. Since \(A\) is finite dimensional, this is equivalent to \(\Hom_A(M,A)\) being projective as an \(A^{op}\)-module; see \cite{christensen-derived-2024}*{Thm.\ 1.3.48}. The functor \(\Hom_A(-,A)\) induces an equivalence
        \[
            (\proj{A})^{op}\simeq\proj{A^{op}}.
        \]
        Hence \(L\) is exact if and only if \(\Hom_A(\xoverline{M},A)\) is projective as an \(A^{op}\)-module. Equivalently, we need to determine when \(\Hom_A(X,A)\) is projective as an \(A^{op}\)-module for every indecomposable non-projective \(A\)-module \(X\).
            
        \((\xLeftarrow[]{\text{if}})\): Let \(X\) be an indecomposable non-projective \(A\)-module, and let
        \[
            P_1\xrightarrow[]{f}P_0 \xrightarrow[]{} X \xrightarrow[]{} 0
        \]
        be a minimal projective presentation of \(X\). Applying \(\Hom_A(-,A)\), we obtain a minimal projective presentation
        \[
            \Hom_A(P_0,A) 
            \xrightarrow[]{\Hom_A(f,A)} 
            \Hom_A(P_1,A) 
            \xrightarrow[]{} 
            \coker\Hom_A(f,A) 
            \xrightarrow[]{} 0
        \]
        of \(\coker\Hom_A(f,A)\) in \(\mod A^{op}\); see \cite{assem-elements1-2006}*{Prop.\ 2.1(b)}. This extends to an exact sequence
        \begin{equation}\label{eqn: Exact sequence}
            0 \xrightarrow[]{} \Hom_A(X,A) 
            \xrightarrow[]{} \Hom_A(P_0,A) 
            \xrightarrow[]{} \Hom_A(P_1,A) 
            \xrightarrow[]{} \coker\Hom_A(f,A) 
            \xrightarrow[]{} 0.
        \end{equation}
        If \(\gdim{A}\leq2\), then \(\coker\Hom_A(f,A)\) has projective dimension at most \(2\) as an \(A^{op}\)-module. Hence its second syzygy \(\Hom_A(X,A)\) is projective. Thus \(\Hom_A(X,A)\) is projective for every indecomposable non-projective \(A\)-module \(X\), and so \(L\) is exact.
            
        \((\xRightarrow[]{\text{only if}})\): Conversely, suppose that \(\Hom_A(X,A)\) is projective as an \(A^{op}\)-module for every indecomposable non-projective \(A\)-module \(X\). Applying the standard dual \(D=\Hom_k(-,k)\) to \eqref{eqn: Exact sequence} yields an exact sequence
        \[
            0\xrightarrow[]{} \tau X 
            \xrightarrow[]{} \nu P_1 
            \xrightarrow[]{} \nu P_0 
            \xrightarrow[]{} \nu X 
            \xrightarrow[]{} 0,
        \]
        where \(\tau\) denotes the Auslander-Reiten translate and \(\nu\) denotes the Nakayama functor. Since \(\Hom_A(X,A)\) is projective as an \(A^{op}\)-module, the module \(\nu X=D\Hom_A(X,A)\) is injective. Hence this sequence is an augmented injective coresolution of \(\tau X\), and therefore
        \[
            \idim_A{\tau X}\leq2.
        \]
        Since \(\tau\) induces a bijection between the indecomposable non-projective and indecomposable non-injective \(A\)-modules, every indecomposable \(A\)-module has injective dimension at most \(2\). It follows that \(\gdim A\leq2\).

        \textit{Part 2.} By Lemma~\ref{L: N Fact, F(pi), F(i) iso}, Part~1, it suffices to show that the component of the norm
        \[
            LX \xrightarrow[]{\mathscr{n}_X} RX
        \]
        is a monomorphism for every finitely generated \(A\)-module \(X\).

        If \(X\) is projective, then this follows from Lemma~\ref{L: Idem Lift-Proj}. Suppose then that \(X\) is non-projective. Since \(\gdim{A}\leq1\), there is an augmented projective resolution
        \[
            0 \xrightarrow[]{} P_1 \xrightarrow[]{} P_0 \xrightarrow[]{} X \xrightarrow[]{} 0.
        \]
        Using the right exactness of \(L\), the left exactness of \(R\), and the naturality of the norm \(\mathscr{n}\), we obtain a commutative diagram in \(\mod\Gamma\)
        \[
        \begin{tikzcd}
                & LP_1 \arrow[r] \arrow[d, "\mathscr{n}_{P_1}"] 
                & LP_0 \arrow[r] \arrow[d, "\mathscr{n}_{P_0}"] 
                & LX \arrow[r] \arrow[d, "\mathscr{n}_{X}"] 
                & 0 \\
                0 \arrow[r] 
                & RP_1 \arrow[r]                                    
                & RP_0 \arrow[r]                                    
                & RX.                                          
        \end{tikzcd}
        \]
        The rows are exact, and by Lemma~\ref{L: Idem Lift-Proj}, Part~1, the maps \(\mathscr{n}_{P_0}\) and \(\mathscr{n}_{P_1}\) are isomorphisms. It follows from the Snake Lemma (or if you prefer from the Four Lemma) that \(\mathscr{n}_X\) is a monomorphism. Hence \(L\xrightarrow[]{\pi}C\) is a natural isomorphism.
    \end{proof}
    \end{prop}

    \begin{prop}\label{P: Idem lift exact}
        Consider Setup~\ref{Setup: Auslander-Idempotent}. The intermediate extension functor \(C\) is exact if and only if \(\gdim{A}\leq1\).
    \begin{proof}
    \((\xLeftarrow[]{\text{if}})\): Assume that \(\gdim{A}\leq 1\). 
    By Proposition~\ref{P: Idem L exact}, Part~2, the canonical natural transformation
    \[
    L \xrightarrow[]{\pi} C
    \]
    is a natural isomorphism. Since \(L\) is exact by Proposition~\ref{P: Idem L exact}, Part~1, it follows that
    \(C\) is exact.

    \((\xRightarrow[]{\text{only if}})\): The rightmost adjoint \(R\) provides us with an answer to this implication. Conversely, suppose that \(A\) is not hereditary, and assume for a contradiction that \(C\) is exact. Choose an \(A\)-module \(X\) with \(\pdim X>1\), together with a short exact sequence
    \[
        0 \xrightarrow[]{} Z \xrightarrow[]{f} P \xrightarrow[]{g} X \xrightarrow[]{} 0,
    \]
    where \(P\) is projective and \(Z\) is not projective. Applying \(C\), we obtain a short exact sequence of \(\Gamma\)-modules
    \[
        0 \xrightarrow[]{} CZ 
        \xrightarrow[]{Cf} 
        CP 
        \xrightarrow[]{Cg} 
        CX 
        \xrightarrow[]{} 0.
    \]
    We construct the following diagram:
    \[
    \begin{tikzcd}
        &                                                                     & RZ \arrow[d, "Rf"] \arrow[ldd, "h"', dashed]                        &                                                 &   \\
        &                                                                     & RP \arrow[d, "\iota^{-1}_P"]                                        &                                                 &   \\
        0 \arrow[r] & CZ \arrow[r, "Cf"] \arrow[d, "\iota_Z"] & CP \arrow[r, "Cg"] \arrow[d, "\iota_P"] & CX \arrow[d, "\iota_X"] \arrow[r] & 0 \\
        0 \arrow[r] & RZ \arrow[r, "Rf"]                                                  & RP \arrow[r, "Rg"]                                                  & RX.                                              &  
    \end{tikzcd}
    \]
    Since \(P\) is projective, the component \(CP \xrightarrow[]{\iota_P} RP\) is an isomorphism by Lemma~\ref{L: Idem Lift-Proj}. Consider the composite
    \[
        t \coloneqq 
        RZ \xrightarrow[]{Rf} RP 
        \xrightarrow[]{\iota_P^{-1}} CP.
    \]
    We claim that \(Cg\circ t=0\). Indeed, postcomposing with \(\iota_X : CX \xrightarrow[]{} RX \) gives
    \[
        \iota_X \circ Cg \circ t
         = Rg \circ \iota_P \circ \iota_P^{-1} \circ Rf
         = Rg \circ Rf
         = 0.
    \]
    Since \(\iota_X\) is a monomorphism, it follows that \(Cg\circ t=0\).
        
    By exactness, there exists a unique \(\Gamma\)-module homomorphism \(RZ \xrightarrow[]{h} CZ\) such that \(Cf\circ h=t\). Hence
    \[
        Rf \circ \iota_Z \circ h
        = \iota_P \circ Cf \circ h
        = \iota_P \circ t
        = \iota_P \circ \iota_P^{-1} \circ Rf
        = Rf.
    \]
    Since \(Rf\) is a monomorphism, we conclude that \(\iota_Z h = \id_{RZ}\). Thus \(\iota_Z\) is a retraction. Since \(\iota_Z\) is also a monomorphism, it is an isomorphism. We therefore have an isomorphism
    \begin{equation}\label{eqn: factor through projectives}
        CZ \xrightarrow[]{\ \iota_Z\ } RZ = \Hom_A(M,Z).
    \end{equation}

    By Lemma~\ref{L: Idemp Lift explicit}, the \(\Gamma\)-module \(CZ\) identifies with the submodule of \(\Hom_A(M,Z)\) consisting of those morphisms \(M\xrightarrow[]{} Z\) that factor through a finitely generated free \(A\)-module. Hence \eqref{eqn: factor through projectives} implies that every morphism \(M\xrightarrow[]{} Z\) factors through a finitely generated free \(A\)-module.
        
    Since \(M\) is an additive generator of \(\mod A\), there exist a natural number \(n\) and morphisms \(Z \xrightarrow[]{i} M^n\) and \(M^n \xrightarrow[]{p} Z\) such that
     \[
     p i = \id_Z.
    \]
    For each \(1\leq j\leq n\), let
    \[
    p_j \coloneqq 
    M \xrightarrow[]{\iota_j} M^n \xrightarrow[]{p} Z,
    \]
    where \(M\xrightarrow[]{\iota_j} M^n\) denotes the \(j\)-th canonical inclusion. By the previous paragraph, each \(p_j\) factors through a finitely generated free \(A\)-module \(F_j\). Hence \(p\) factors through the finitely generated free module
    \[
    F \coloneqq \bigoplus_{j=1}^n F_j.
    \]
    Therefore \(\id_Z=p i\) factors through \(F\). It follows that \(Z\) is a direct summand of a projective module, and hence \(Z\) is projective. This contradicts our choice of \(Z\). Therefore \(C\) is not exact whenever \(\gdim{A}>1\). Hence \(C\) is exact if and only if \(\gdim{A}\leq1\).
    \end{proof}
    \end{prop}

    \begin{cor}\label{C: L lift iso iff hereditary}
        Consider Setup~\ref{Setup: Auslander-Idempotent}. The canonical natural transformation
    \[
        L\xrightarrow[]{\pi}C
    \]
    obtained in Lemma~\ref{L: N Fact, F(pi), F(i) iso} is a natural isomorphism if and only if
    \(\gdim{A}\leq 1\).
    \begin{proof}
    \((\xLeftarrow{\text{if}})\): The result follows from
    Proposition~\ref{P: Idem L exact}, Part~2.

    \((\xRightarrow{\text{only if}})\): Suppose that \(L\xrightarrow[]{\pi}C\) is a natural isomorphism. Since \(L\) is right exact, it follows that
    \(C\) is right exact. Moreover, by
    Proposition~\ref{P: lift properties}, Part~5, the functor
    \(C\) preserves monomorphisms. Hence
    \(C\) is exact. Therefore, by
    Proposition~\ref{P: Idem lift exact}, we have \(\gdim{A}\leq 1\).
    \end{proof}
    \end{cor}

    We summarise the preceding discussion in the following table.

    \begin{table}[ht]
    \centering
        \setlength{\extrarowheight}{6pt}
        \setlength{\tabcolsep}{10pt}
        \setlength{\arrayrulewidth}{0.6pt}
        \arrayrulecolor{black}
        \begin{tabular}{|>{\columncolor[HTML]{EFEFEF}}c|c|c|}
        \hline
        \cellcolor[HTML]{C0C0C0}\textbf{Functor} 
        & \cellcolor[HTML]{C0C0C0}\textbf{Exact if and only if} 
        & \cellcolor[HTML]{C0C0C0}\textbf{Natural equivalences} \\ 
        \hline
        \(R\) 
        & \(\gdim{A}=0\) 
        & \(\gdim{A}=0 \iff L \cong C \cong R\) \\ 
        \hline
        \(L\) 
        & \(\gdim{A}\leq 2\) 
        & \(\gdim{A}\leq 1 \iff L \cong C\) \\ 
        \hline
        \(C\) 
        & \(\gdim{A}\leq 1\) 
        & --- \\ 
        \hline
        \end{tabular}
    \caption{Exactness and comparison criteria for the functors in Setup~\ref{Setup: Auslander-Idempotent}.}
    \label{tab: Auslander exactness comparison}
    \end{table}
        
        We now give an explicit example in which \(\gdim{A}=\infty\). In this case, none of the three functors \(L\), \(C\), and \(R\) is exact. The example therefore illustrates the different ways in which exactness can fail.
        
        \begin{ex}
        Let \(k\) be a field. Consider the quiver
        \[
        \begin{tikzcd}
        Q\colon 1 \arrow[r, shift left] & 2 \arrow[l, shift left]
        \end{tikzcd}
        \]
        and let \(I\) be the admissible ideal generated by the path
        \[
        2 \longrightarrow 1 \longrightarrow 2.
        \]
        Set \(\Gamma=kQ/I\). The Auslander--Reiten quiver of the bound quiver algebra \(\Gamma\) is
        \[
        \begin{tikzcd}
        \substack{1} \arrow[rd] \arrow[d, dashed, shift right] & \substack{1\\2} \arrow[l] \arrow[d, dashed] & \substack{1\\2\\1} \arrow[l] \\
        \substack{2} \arrow[ru] \arrow[u, dashed, shift right] & \substack{2\\1} \arrow[l] \arrow[ru]        &                             
        \end{tikzcd}
        \]
        The algebra \(\Gamma\) is the Auslander algebra of the dual numbers \(A=k[x]/(x^2)\). We identify \(A\) with the idempotent subalgebra associated to the idempotent \(e_1\in \Gamma\), so that
        \[
        A=e_1\Gamma e_1.
        \]
        
        The following three copies of the Auslander--Reiten quiver of \(\Gamma\) highlight the essential images of \(L\), \(C\), and \(R\), respectively.
        
        \begin{center}
        \begin{tikzpicture}[>={Computer Modern Rightarrow}, every node/.style={inner sep=1.5pt}, font=\small]
        
        \node at (1.4,1.45) {\(L\)};
        
        \node (L2)   at (0,0.75)    {\(\substack{1}\)};
        \node (L21)  at (1.4,0.75)  {\(\boldsymbol{\substack{1\\2}}\)};
        \node (L212) at (2.8,0.75)  {\(\boldsymbol{\substack{1\\2\\1}}\)};
        \node (L1)   at (0,-0.75)   {\(\substack{2}\)};
        \node (L12)  at (1.4,-0.75) {\(\substack{2\\1}\)};
        
        \draw[->] (L21) -- (L2);
        \draw[->] (L212) -- (L21);
        \draw[->] (L12) -- (L1);
        
        \draw[->] (L2) -- (L12);
        \draw[->] (L1) -- (L21);
        \draw[->] (L12) -- (L212);
        
        \draw[->, dashed] (-0.07,0.55) -- (-0.07,-0.55);
        \draw[->, dashed] (0.07,-0.55) -- (0.07,0.55);
        \draw[->, dashed] (L21) -- (L12);
        
        \node[draw=myorange, dashed, fit=(L212), inner sep=2pt] {};
        \node[draw=myorange, dashed, fit=(L21), inner sep=2pt] {};
        
        \node at (1.4,-1.85) {\(\boldsymbol{\substack{1\\2}} \xrightarrow[]{} \boldsymbol{\substack{1\\2\\1}} \xrightarrow[]{} \boldsymbol{\substack{1\\2}} \xrightarrow[]{} 0\)};
        
        \node at (5.6,1.45) {\(C\)};
        
        \node (M2)   at (4.2,0.75)    {\(\boldsymbol{\substack{1}}\)};
        \node (M21)  at (5.6,0.75)    {\(\substack{1\\2}\)};
        \node (M212) at (7.0,0.75)    {\(\boldsymbol{\substack{1\\2\\1}}\)};
        \node (M1)   at (4.2,-0.75)   {\(\substack{2}\)};
        \node (M12)  at (5.6,-0.75)   {\(\substack{2\\1}\)};
        
        \draw[->] (M21) -- (M2);
        \draw[->] (M212) -- (M21);
        \draw[->] (M12) -- (M1);
        
        \draw[->] (M2) -- (M12);
        \draw[->] (M1) -- (M21);
        \draw[->] (M12) -- (M212);
        
        \draw[->, dashed] (4.13,0.55) -- (4.13,-0.55);
        \draw[->, dashed] (4.27,-0.55) -- (4.27,0.55);
        \draw[->, dashed] (M21) -- (M12);
        
        \node[draw=myblue, dashed, fit=(M212), inner sep=2pt] {};
        \node[draw=myblue, dashed, fit=(M2), inner sep=2pt] {};
        
        \node at (5.6,-1.85) {\(\boldsymbol{\substack{1}} \rightarrowtail \boldsymbol{\substack{1\\2\\1}} \twoheadrightarrow \boldsymbol{\substack{1}}\)};
        
        \node at (9.8,1.45) {\(R\)};
        
        \node (R2)   at (8.4,0.75)    {\(\substack{1}\)};
        \node (R21)  at (9.8,0.75)    {\(\substack{1\\2}\)};
        \node (R212) at (11.2,0.75)   {\(\boldsymbol{\substack{1\\2\\1}}\)};
        \node (R1)   at (8.4,-0.75)   {\(\substack{2}\)};
        \node (R12)  at (9.8,-0.75)   {\(\boldsymbol{\substack{2\\1}}\)};
        
        \draw[->] (R21) -- (R2);
        \draw[->] (R212) -- (R21);
        \draw[->] (R12) -- (R1);
        
        \draw[->] (R2) -- (R12);
        \draw[->] (R1) -- (R21);
        \draw[->] (R12) -- (R212);
        
        \draw[->, dashed] (8.33,0.55) -- (8.33,-0.55);
        \draw[->, dashed] (8.47,-0.55) -- (8.47,0.55);
        \draw[->, dashed] (R21) -- (R12);
        
        \node[draw=mypink, dashed, fit=(R212), inner sep=2pt] {};
        \node[draw=mypink, dashed, fit=(R12), inner sep=2pt] {};
        
        \node at (9.8,-1.85) {\(0 \xrightarrow[]{}\boldsymbol{\substack{2\\1}} \xrightarrow[]{} \boldsymbol{\substack{1\\2\\1}} \xrightarrow[]{} \boldsymbol{\substack{2\\1}}\)};
        
        \end{tikzpicture}
        \end{center}
        
        The sequences displayed below the three quivers are the images under \(L\), \(C\), and \(R\), respectively, of the Auslander--Reiten sequence in \(\mod{A}\). Since \(\gdim{A}=\infty\), the table above shows that none of these three functors is exact. The picture records the different failures of exactness: for \(L\) the failure occurs at the left-hand term, for \(R\) it occurs at the right-hand term, and for \(C\) it occurs at the middle term.
        \end{ex}

        \paragraph{\bf Acknowledgement.} We thank Peter J{\o}rgensen and Marvin Plogmann for useful discussions. We particularly thank Peter for introducing us to Example~\ref{E: Auslander algebras2}. The second author was supported by the Researcher Project for Early Career Scientists (FRIPRO) from the Norwegian Research Council (project no. 356106).

\bibliographystyle{alpha}
\bibliography{references}

\begin{flushleft}
\textsc{Department of Mathematical Sciences, NTNU}\\
\textsc{NO-7491 Trondheim, Norway}\\
Email address: \href{mailto:david.nkansah@ntnu.no}{\nolinkurl{david.nkansah@ntnu.no}}
\end{flushleft}

\end{document}